\theoremstyle{plain}
\newtheorem{thm}{Theorem}[section]
\newtheorem{prop}[thm]{Proposition}
\theoremstyle{definition}
\newtheorem{dfn}[thm]{Definition}
\newtheorem{exa}[thm]{Example}
\theoremstyle{remark}
\newtheorem{rmk}[thm]{Remark}
\newcommand{\fg}{\mathfrak g}
\newcommand{\C}{\mathbb{C}}
\newcommand{\SB}{S^\bullet}
\DeclareMathOperator{\End}{\mathrm{End}}
\newcommand{\V}{\mathcal{V}}
\newcommand{\OO}{\mathcal{O}}
\newcommand{\CP}{\mathbb{CP}}
\newcommand{\hFcl}{\hat{\mathcal{F}}_{cl}}
\newcommand{\hatA}{\hat{\mathcal A}}
\DeclareMathOperator{\Spec}{\operatorname{Spec}}
\newcommand{\W}{\mathcal{W}}
\newcommand{\Z}{\mathbb Z}
\DeclareMathOperator{\Span}{\mathrm{Span}}
\newcommand{\Cur}{\mathrm{Cur}}
\newcommand{\A}{\mathcal{A}}
\newcommand{\N}{\mathcal{N}}
\newcommand{\PD}{\mathcal{P}}
\newcommand{\BBA}{\mathbb{A}}
\newcommand{\Sf}{\mathscr{S}_f}
\newcommand{\fsl}{\mathfrak{sl}}
\newcommand{\cl}{\mathrm{cl}}
\newcommand{\sub}{\mathrm{sub}}
\newcommand{\fin}{\mathrm{fin}}
\newcommand{\R}{\mathfrak{R}}
\newcommand{\Pois}{\mathrm{Poi}}
\newcommand{\mix}{\mathrm{mix}}
\newcommand{\ord}{\mathrm{ord}}
\title{Shifted symplectic structures and Poisson vertex algebra}
\author{Wenda Fang}
\address{Research Institute for Mathematical Sciences, Kyoto University,
 Kyoto 606-8502 JAPAN}
\email{wenda@kurims.kyoto-u.ac.jp}
\thanks{}
\keywords{Representation theory, Algebraic geometry, Mathematical physics}
\begin{document}


\maketitle

\begin{abstract} We construct Poisson vertex algebra (PVA) structures on arc spaces from $1$-shifted symplectic (QP) data. A Hamiltonian satisfying the classical master equation induces a canonical PVA $\lambda$-bracket, matching the Hamiltonian-operator formalism for integrable hierarchies. As applications, we find the resulting PVA sheaves on $\mathbb P^1$ and reinterpret our classical $R$-matrix as Maurer-Cartan data in a deformation-theoretic geometric framework, yielding AKS-type integrable hierarchies from the corresponding $R$-deformations. \end{abstract}

\section{Introduction}
The purpose of this article is to further investigate Poisson vertex
algebras (PVAs) from the viewpoint of the chiralization of Poisson–Lie
theory.  In particular, following the finite-dimensional manifold case,
we construct a PVA structure on the jet scheme $X_\infty$ of any
finite type scheme $X$ from a degree two function in
$\C[Y_\infty]$, where $Y:=T^*[1]X$; the resulting bracket is induced by
the natural shifted-symplectic structure carried by $Y_\infty$.\\

Poisson vertex algebras (PVAs) provide a purely algebraic
framework for describing Hamiltonian structures of evolutionary partial
differential equations (PDEs)~\cite{BDSK09,FBZ04}. A key advantage of the PVA formalism is
its purely algebraic nature, which also makes it particularly well
suited to symbolic computation~\cite{CV18}. Analogous to the way
Poisson algebras arise as the $\hbar\to 0$ limits of one-parameter
families of associative algebras $A_{\hbar}$, a PVA may be viewed as the
classical limit of a family of vertex algebras $V_{\hbar}$~\cite{K1,Li04}.\\
The theory of graded geometry plays an important role in many areas of
mathematics and physics. Differential graded (dg) symplectic manifolds
are central to the AKSZ formalism and the Poisson $\sigma$-model
~\cite{AKSZ97}. Within this framework, one associates a
topological field theory to a dg-symplectic manifold; classical examples
include the Poisson $\sigma$-model and Chern-Simons theory.
Graded-geometric methods also provide a uniform approach to a variety of
Poisson-Lie theoretic structures, such as Courant algebroids~\cite{severa2017}
and generalized complex geometry~\cite{Gua11}, and they are useful in deformation
quantization~\cite{Kon03}. In recent years there has been intensive work
on classical $r$-matrices, graded manifolds, and quasi-Poisson structures
~\cite{AK00}, as well as twisted Poisson $\sigma$-models~\cite{CIJ24,IX25,IX14}.
Moreover, the relationship between graded manifolds and PVAs was investigated in
Ryo~Hayami’s PhD thesis~\cite{Hay23}. In recent years, the seminal paper~\cite{PTVV13} of
Pantev, To\"en, Vaqui\'e, and Vezzosi on shifted symplectic structures has
profoundly influenced derived algebraic geometry.\footnote{Often cited as PTVV;
add the precise citation.}

In earlier work, the author ~\cite{Fang25} introduced a classical $R$-matrix
for Lie conformal algebras, as a chiral analogue of the finite-dimensional
$R$-matrix, and established an algebraic realisation of the (modified) classical
Yang-Baxter equation in this conformal setting. Also, in the \cite{LM19}, Malikov and Linshaw gave an example of chiral poisson group. Motivated by these results, it is
natural to regard PVAs as a chiralization of Poisson-Lie theory. In parallel,
Etingof and Kazhdan developed a series of papers on the quantisation of Lie
bialgebras~\cite{EK-series}, while Haisheng~Li and collaborators have
investigated vertex coalgebras, vertex bialgebras, and quantum vertex
algebras~\cite{HLX22}. We also note that in a talk at IHES~\cite{Saf19},
Pavel~Safronov discussed the Courant $\sigma$-model and chiral Lie algebroids.
From the perspective of Hamiltonian PDE theory, another key motivation is the
quantisation of Hamiltonian structures. A notable success in this direction via
PVAs appeared in Zhe~Wang’s recent work~\cite{Wang24}, which connects to a conjecture
of Dubrovin in symplectic field theory (SFT)~\cite{Du16}. Independently, Bazhanov and collaborators
developed an alternative quantisation approach for the KdV hierarchy and its Hamiltonian
structures; see, for example, ~\cite{BLZ-series} on quantum KdV, the quantum
inverse-scattering method, and the Yang-Baxter equation. In \cite{BR16}, A Buryak studied the
quantum KdV and double ramification cycles.

This paper is inspired by the works of Getzler~\cite{Get02}, Liu-Zhang~\cite{LZ11},
and Arakawa~\cite{Ara12}. For a scheme $X$, we reinterpret PVA structures on $X_\infty$
as degree-two functions on the jet of the shifted cotangent bundle $T^*[1]X$, thereby
“chiralising” the classical equivalence between Poisson manifolds and degree-one
symplectic $N$-manifolds (Schwarz~\cite{Sch93}). Liu and Zhang~\cite{LZ11} showed that this framework naturally
accommodates deformations of Hamiltonian structures, notably Miura transformations
(see also~\cite{Cas15}). Using the same formalism, Zhang, Liu, and Wang~\cite{LWZ-series}
resolved the Dubrovin-Zhang conjecture~\cite{DZ01}. Our viewpoint unifies the Kac
Hamiltonian formalism with the Dubrovin-Zhang variational approach and yields a uniform
geometric derivation of the classical $R$-matrix for Lie conformal algebras.\par
The following theorem is our main result and may be understood as a chiral analogue of Schwarz’s classification of odd symplectic manifolds \cite{Sch93}, the AKSZ QP formalism \cite{AKSZ97}, and Roytenberg’s graded-symplectic description of Courant algebroids \cite{Roy02}.\\
\begin{thm}[Correspondence Theorem]
Let \(X\) be a smooth, reduced, finitely generated \(\C\)-scheme, and set \(Y:=T^*[1]X\).
Then a degree-$2$ element \(P\in\Gamma(Y_\infty,\OO_{Y_\infty})_2\) defines a Poisson vertex algebra structure on \(X_\infty\) (i.e., it makes \(\OO_{X_\infty}\) a sheaf of PVAs) if and only if \(\bigl[\!\int P,\!\int P\bigr]=0\), where \([\,\cdot\,,\,\cdot\,]\) denotes the Lie bracket on local functionals induced by the chiral Gerstenhaber bracket; in this case, the \(\lambda\)-bracket corresponding to \(P\) is skew-symmetric.\\
Conversely, any PVA \(\lambda\)-bracket on \(\OO_{X_\infty}\) determines a unique
degree-\(2\) local functional \(\int P\) such that
\(\bigl[\!\int P,\!\int P\bigr]=0\);
equivalently, it determines \(P\) uniquely modulo total derivatives.
\end{thm}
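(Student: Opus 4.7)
The plan is to establish both directions of the correspondence via a chiral derived-bracket construction, mirroring the classical proof that a bivector $P$ on $X$ is Poisson if and only if $[P,P]_{\mathrm{Schouten}}=0$. In local coordinates write $X=\Spec\C[x^1,\dots,x^n]$, so $Y=T^*[1]X$ has even coordinates $x^i$ and odd fiber coordinates $p_i$ in degree $1$; on $Y_\infty$ we then have $x^i_{(k)}, p_{i,(k)}$ for $k\geq 0$. The shifted symplectic form on $Y$ extends to a chiral Gerstenhaber-type $\lambda$-bracket $\{\!\{\cdot_\lambda\cdot\}\!\}$ on $\OO_{Y_\infty}$ of cohomological degree $-1$, characterised on generators by $\{\!\{x^i{}_\lambda p_j\}\!\}=\delta^i_j$ and extended by sesquilinearity, the graded Leibniz rule, and graded antisymmetry. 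Quotienting by $\partial\OO_{Y_\infty}$ produces the graded Lie bracket $[\,,\,]$ on local functionals, and a degree-$2$ element $P$ is a bilinear expression in the $p_{i,(k)}$ with coefficients in $\OO_{X_\infty}$.

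Given such $P$, I will define a candidate PVA $\lambda$-bracket on $\OO_{X_\infty}$ by the chiral derived bracket, schematically
\[
\{a_\lambda b\}_P \;:=\; \{\!\{\{\!\{P_\mu a\}\!\}_\lambda b\}\!\}\big|_{p=0},
\]
with the formal parameters identified appropriately and $a,b\in\OO_{X_\infty}$. Sesquilinearity and both Leibniz rules then follow mechanically from the corresponding axioms for $\{\!\{\cdot_\lambda\cdot\}\!\}$ combined with the fact that $P$ has polynomial degree $2$ in the $p$'s. Skew-symmetry of $\{\cdot_\lambda\cdot\}_P$ reduces to the graded symmetry of $\{\!\{\cdot_\lambda\cdot\}\!\}$ in degree $-1$: because $P$ has degree $2$, a single interchange produces exactly the $(-1)$ sign required for the PVA axiom, modulo careful bookkeeping of Koszul signs coming from the odd $p_i$.

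The core of the proof is the equivalence between the PVA Jacobi identity for $\{\cdot_\lambda\cdot\}_P$ and the chiral master equation $[\int P,\int P]=0$. Following the Kosmann-Schwarzbach and Voronov derived-bracket philosophy, I will expand the Jacobiator of $\{\cdot_\lambda\cdot\}_P$ using the graded Jacobi identity of $\{\!\{\cdot_\lambda\cdot\}\!\}$ and show that it coincides, after integration, with the action of $\tfrac12[\int P,\int P]$ on the triple $(a,b,c)$. This step is the main technical obstacle: one must match the $\lambda,\mu$ dependence and the Koszul signs between the iterated derived bracket and the Lie bracket of local functionals, a chiral refinement of the finite-dimensional identity $\{\{P,P\}_{\mathrm{Sch}},f\}=2[\{P,\cdot\},\{P,\cdot\}]f$. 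Once established, the equivalence Jacobi $\Leftrightarrow [\int P,\int P]=0$ is immediate.

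For the converse, given a PVA $\lambda$-bracket $\{\cdot_\lambda\cdot\}$ on $\OO_{X_\infty}$, write $\{x^i{}_\lambda x^j\}=\sum_k H^{ij}_k(x_{(\bullet)})\,\lambda^k$ and define
\[
P \;:=\; \tfrac12\sum_{i,j,k} H^{ij}_k(x_{(\bullet)})\,p_{i,(0)}\,p_{j,(k)},
\]
a degree-$2$ element of $\OO_{Y_\infty}$. A direct computation, dual to the forward direction, shows that $\{\cdot_\lambda\cdot\}_P$ reproduces the given bracket, so the Jacobi identity translates into $[\int P,\int P]=0$. Uniqueness modulo total derivatives follows because the class of $P$ in $\OO_{Y_\infty}/\partial\OO_{Y_\infty}$ is determined by and determines the data $H^{ij}_k$: integration by parts reorganises different representative densities, and the $\lambda$-bracket of the induced PVA reads these coefficients off directly. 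This closes the correspondence.
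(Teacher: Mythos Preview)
Your strategy is sound and is a genuine alternative to the paper's argument, but the two proofs are organised quite differently and your sketch has one imprecise step that deserves attention.

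The paper does not run a direct derived-bracket computation. Instead it (i) normalises the class $\int P$ via the operator $\N$ of \S\ref{Sec:NOE} to a canonical density $\tfrac12\,\theta_\alpha H^{\alpha\beta}(\partial)\theta_\beta$ with $H$ skew-adjoint, (ii) defines the $\lambda$-bracket on generators by $\{u^\alpha{}_\lambda u^\beta\}:=H^{\beta\alpha}(\lambda)$ and extends by the Master Formula, and then (iii) invokes Theorem~\ref{Thm:NRB} to identify $[\,\cdot\,,\,\cdot\,]$ with the variational Schouten bracket and quotes \cite{BDSK09,LZ11} for the equivalence ``$H$ Hamiltonian $\Leftrightarrow$ $[H,H]_{SB}=0$ $\Leftrightarrow$ PVA Jacobi''. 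Skew-symmetry is thus obtained for free from the normalisation, not from a sign chase. The converse is the same dictionary read backwards. What this buys is brevity: the Jacobiator computation is outsourced to the literature.

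Your route is the self-contained Kosmann-Schwarzbach/Voronov one: build $\{\cdot_\lambda\cdot\}_P$ as a chiral derived bracket and show directly that its Jacobiator is governed by $[\int P,\int P]$. This is perfectly viable and in fact underlies the cited results, but your formula $\{a_\lambda b\}_P:=\{\!\{\{\!\{P_\mu a\}\!\}_\lambda b\}\!\}\big|_{p=0}$ ``with the formal parameters identified appropriately'' is the point where the argument is not yet a proof. The inner bracket carries a parameter $\mu$ and the outer one $\lambda$; you must specify exactly how $\mu$ is eliminated (in practice one sets $\mu=0$ after moving it past $a$, or equivalently works with $\{\int P,\,\cdot\,\}$ acting on $\hatA$ as in Proposition~\ref{Prop:CGA}(2) and then takes a single $\lambda$-bracket). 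Until that is pinned down, neither the sesquilinearity check nor the Jacobiator identity can be carried out honestly, and the skew-symmetry argument (``a single interchange produces exactly the $(-1)$ sign'') hides a genuine computation: without normalising $P$ first, the symmetric part of the associated operator does not automatically vanish, and one must argue, as the paper does via graded commutativity of the $\theta$'s under $\int$, that only the skew-adjoint part of $H$ contributes to the class $\int P$. Once those two points are made precise your proof goes through and has the advantage of being independent of \cite{BDSK09,LZ11}.
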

As an application, we consider sheaves of PVAs on arc spaces.\\

For instance, let $X=\CP^1$ with the standard affine cover $\{U_1,U_2\}$ and local
coordinates $u$ on $U_1$ and $v$ on $U_2$, related by $v=1/u$ on
$U_1\cap U_2$.
Let us restrict ourselves to scalar PVA structures corresponding to differential operators
of order at most~$3$.
The skew-symmetry of the $\lambda$-bracket implies that on $U_1$ any such bracket
has the form
\[
  \{u_\lambda u\}
  \;=\;\frac{1}{2} (f_1)' + f_1\,\lambda + \frac{c}{12}\lambda^3,
  \qquad
  f_1\in \A_{U_1},\ c\in\C.
\]
This corresponds to a degree-$2$ element
\[
  P \;=\; f_1\,\theta\,\theta^{(1)} \;+\;\frac{c}{12}\,\theta\,\theta^{(3)}
  \;\in\;\Gamma\bigl(T^*[1]U_1,\pi_*\OO_{Y_\infty}\bigr).
\]
The coordinate change $u\mapsto v=1/u$ induces
$\theta_u = -u^{-2}\theta_v$, and requiring $P$ to extend globally to $X$
leads to the following classification.

\smallskip
\noindent
(a) If $c\neq 0$, then, up to adding total derivatives and rescaling the coordinate $u$,
the only global scalar PVA structure of the above form on $X_\infty$ is
\[
  \{u_\lambda u\}=u' + 2u\,\lambda + \frac{c}{12}\lambda^3,
\]
namely the Virasoro-Magri $\lambda$-bracket with central charge~$c$.

\smallskip
\noindent
(b) If $c=0$, then every global scalar PVA structure of the above form is
given by
\[
  \{u_\lambda u\}= q'(u)\,u' + 2q(u)\,\lambda,
\]
where, on an affine chart, $q(u)$ is a polynomial of degree at most~$4$;
equivalently, $q\in H^0\bigl(X,\OO_{X}(4)\bigr)$.
This recovers exactly the one-dimensional hydrodynamic-type Hamiltonian structures on $\CP^1$.
In particular, we obtain a convenient sufficient condition for global extendability,
namely the sieve inequality~\eqref{eq:SIE}.
\\

On the other hand, starting from Theorem~\ref{Thm:CS}, we derive the Maurer–Cartan
equation for the classical $R$-matrix of a Lie conformal algebra, as defined
in~\cite{Fang25}. This may also be viewed as a chiral version of Drinfel’d’s and
Kosmann-Schwarzbach’s results on classical $R$-matrices~\cite{Dr83,KS04}.

\begin{thm}
Let \((L,\partial,[\,\cdot{}_{\lambda}\cdot\,])\) be a Lie conformal algebra
(here we assume that \(L\) is a free \(\C[\partial]\)-module of finite rank, e.g.\
\(L=v_k(\mathfrak{g})\), the affine Kac-Moody Lie conformal algebra), and let
\(\V:=\SB(L)\) be the free commutative differential algebra generated by \(L\),
endowed with the Poisson vertex algebra structure obtained by extending the
\(\lambda\)-bracket via the Master Formula.

Set \(N:=\mathrm{rank}_{\C[\partial]}(L)\) and let \(X:=\BBA^{N}\).
Let \(X_\infty\) be the arc space of \(X\). Then \(\OO(X_\infty)\cong \V\) as
differential algebras; hence we identify \(X_\infty\simeq\Spec(\V)\).
Set \(Y:=T^*[1]X\).
Suppose \(R\in\End_{\C}(L)\) satisfies \([R,\partial]=0\).

Then the (modified) classical Yang-Baxter equation for \(R\) is equivalent to the
statement that the element
\[
P_R \;=\; \theta_\alpha\,\{\,u^{\alpha}{}_{\partial}u^{\beta}\,\}_{R,\rightarrow}\,\theta_\beta
\ \in\ \hatA^{2}
\]
satisfies \(\{\!\int P_R,\int P_R\!\}=0\). (Here \({}_{\rightarrow}\) means: replace
\(\lambda\) by \(\partial\) and move all \(\partial\)’s to the right.)
Equivalently, set $\Theta_R :=\int(P_R - P)$. Then $\Theta_R$ is a Maurer-Cartan element of the
dg Lie algebra $(\hFcl, [ - , - ], d_P:=[P,-])$, i.e.\ $d_P\Theta_R+\frac12[\Theta_R,\Theta_R]=0.$
\end{thm}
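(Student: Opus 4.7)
The plan is to use the Correspondence Theorem as a dictionary between PVA $\lambda$-brackets on $\OO(X_\infty)\cong\V$ and degree-$2$ local functionals $\int P$ satisfying $\{\int P,\int P\}=0$, and to route the (modified) classical Yang-Baxter equation directly through this dictionary. First, by the Correspondence Theorem, the original PVA on $\V$ is encoded by the unique (mod total derivatives) degree-$2$ element
\[
P=\theta_\alpha\{u^\alpha{}_\partial u^\beta\}_\rightarrow\theta_\beta \in \hatA^2,
\]
and in particular $\{\int P,\int P\}=0$. Next, since $[R,\partial]=0$, the $R$-deformed formula $\{a_\lambda b\}_R:=\{(Ra)_\lambda b\}+\{a_\lambda (Rb)\}-R\{a_\lambda b\}$ defines a sesquilinear, skew-symmetric $\lambda$-bracket on $L$ whose Master-Formula extension to $\V$ is, by inspection of the generators, exactly the $\lambda$-bracket read off from $P_R$ via the dictionary of the Correspondence Theorem.

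With this identification, the Correspondence Theorem converts the condition $\{\int P_R,\int P_R\}=0$ into the statement that $\{-_\lambda -\}_R$ is a PVA $\lambda$-bracket, i.e.\ that it satisfies the Jacobi identity. The content of the classical $R$-matrix theory for Lie conformal algebras of \cite{Fang25} is precisely that the Jacobi identity for $\{-_\lambda -\}_R$ is equivalent to the mCYBE for $R$; combining these two equivalences gives the first assertion of the theorem.

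For the Maurer-Cartan reformulation, write $P_R=P+(P_R-P)$ so that $\Theta_R=\int(P_R-P)$, and expand the chiral Gerstenhaber bracket bilinearly:
\[
\{\int P_R,\int P_R\}=\{\int P,\int P\}+2\{\int P,\Theta_R\}+\{\Theta_R,\Theta_R\}.
\]
The first term vanishes by the preceding step, and by definition $d_P\Theta_R=\{\int P,\Theta_R\}$, so $\{\int P_R,\int P_R\}=0$ is equivalent to $d_P\Theta_R+\tfrac12[\Theta_R,\Theta_R]=0$, as required.

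The main obstacle is verifying that $P_R$ really corresponds under the Correspondence Theorem to the $R$-deformed $\lambda$-bracket: this amounts to expanding $P_R$ in the $\theta$-generators, applying the chiral Gerstenhaber formula, and using integration by parts to reorganise the terms so that $R$ appears in exactly the three slots of the deformed bracket. The hypothesis $[R,\partial]=0$ enters at this step, ensuring that $R$ commutes with $\partial$ and hence descends to local functionals. Once this matching is established, the equivalence with mCYBE is (essentially) the identity already proven in \cite{Fang25}, and the Maurer-Cartan step is then a formal consequence of bilinearity.
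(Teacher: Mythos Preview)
Your approach is essentially the same as the paper's: the paper states this result as an immediate corollary of the Correspondence Theorem (Theorem~\ref{Thm:CS}) together with the definition of a classical $R$-matrix from \cite{Fang25}, and gives no further proof. Your write-up supplies exactly the details that make this deduction explicit---identifying $P_R$ with the $R$-deformed $\lambda$-bracket, invoking the Correspondence Theorem to trade $\{\int P_R,\int P_R\}=0$ for the Jacobi identity of $\{-_\lambda-\}_R$, and then quoting \cite{Fang25} for the equivalence with the (m)CYBE; the Maurer--Cartan rewriting is indeed a formal bilinear expansion using $\{\int P,\int P\}=0$.

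One small slip: your formula $\{a_\lambda b\}_R:=\{(Ra)_\lambda b\}+\{a_\lambda(Rb)\}-R\{a_\lambda b\}$ does not match the paper's convention. In Example~\ref{exa:CDSsl3} the induced bivector is $\Pi_R=\sum_{i,j}(\{R(u_i),u_j\}+\{u_i,R(u_j)\})\theta_i\theta_j$, i.e.\ the $R$-bracket is $\{Ra_\lambda b\}+\{a_\lambda Rb\}$ without the subtracted $R\{a_\lambda b\}$ term (the standard Semenov-Tian-Shansky/AKS form used in \cite{Fang25}). With the extra term your step ``$P_R$ corresponds under the dictionary to $\{-_\lambda-\}_R$'' would no longer be a tautology, since $P_R=\theta_\alpha\{u^\alpha{}_\partial u^\beta\}_{R,\rightarrow}\theta_\beta$ literally encodes the two-term bracket on generators. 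Once you drop that term, your argument goes through verbatim and coincides with the paper's intended (but unwritten) proof.
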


As an example, consider the affine Kac–Moody Lie conformal algebra case, let $\V:=\V_k(\mathfrak{g})$.
We identify $\Spec\V$ with $\mathfrak{g}^*_{k,\infty}$, where $k$ indicates the level of the
$\lambda$-bracket. In particular,~\cite[Corollary 5.6]{Fang25} shows that if
$(\mathfrak{g},(\,\cdot\,,\,\cdot\,))$ is a quadratic Lie algebra and
$(\mathfrak{g},\mathfrak{g}_+,\mathfrak{g}_-)$ is a Manin triple with respect to $(\,\cdot\,,\,\cdot\,)$,
then the projection maps give a classical $R$-matrix for $v_k(\mathfrak{g})$.
This can be viewed as a chiral counterpart of the classical Manin triple, compatible
with the viewpoint from generalized complex geometry~\cite{Gua11}.

We remark another point of view on Theorem~\ref{Thm:RM} is that the classical $R$-matrix
induces a \emph{BRST charge} on the complex $\hFcl^\bullet$:
$Q_R:=\int P_R$ satisfies $Q_R^2=[Q_R,Q_R]=0$, so the derivation
$d_R:=[Q_R,-]$ on $\hFcl^\bullet$ squares to zero.

Also, motivated by the above formalism and by the works of Ikeda–Xu \cite{IX25,IX14}, Voronov \cite{Vor02}, Gualtieri \cite{Gua11}, and Safronov~\cite{Saf21,Saf17} we reformulate and generalize the notion of a classical $R$-matrix as a degree-two function on a shifted manifold and on its jet space. Our approach also extends the definition of classical $r$-matrices to Poisson (vertex) Poisson algebras, and Lie conformal algebras~\cite{Lib08}. 
In an ongoing work with Ikeda, we are working in this direction, aiming at a geometric explanation of the coalgebra structure of Lie conformal algebras.
We also plan to provide a description of the classical $r$-matrix for classical finite $\W$-algebra, as well as their quantisation via the theory of Voronov, Ikeda, and Xu, in our future work. Moreover, we are trying to generalise the definition
to the classical $r$-matrix of PVAs/VOAs via the techniques developed in ~\cite{IX25,IX14,Vor02}. Motivated by the QP–manifold formalism and by the works of Ikeda–Xu~\cite{IX25,IX14},
Voronov~\cite{Vor02}, and Gualtieri~\cite{Gua11}, we recast the notion of a classical
\(R\)-matrix as a degree-two function on a shifted manifold and on its jet space, which naturally connects to the Maurer-Cartan equation.
This viewpoint may also extend classical \(r\)-matrices to Poisson (vertex) algebras
and to Lie conformal algebras~\cite{Lib08}.
In forthcoming work, we will develop a geometric explanation of
the coalgebra structure of Lie conformal algebras and will describe classical \(r\)-matrices.
In addition, we plan to study classical finite \(\mathcal W\)-algebras as well its chiralization via this formalism.\\

This also motivates our definition of an $R$-matrix for classical finite
$\W$-algebras in Section~\ref{subsec:CR}. Using this notion, we apply
the Adler-Kostant-Symes scheme to construct an integrable hierarchy
on $\W^{\cl}_{\fin}(\fsl_3,f_{\sub})$ and present several explicit
examples of $R$-matrices. Motivated by the work of
Belavin-Drinfel'd~\cite{BD82}, we describe the constant $R$-deformations
of the subregular classical finite $\W$-algebra
$\W^{\cl}_{\fin}(\fsl_3,f_{\sub})$ in Section~\ref{exa:CDSsl3}, and we
also obtain an integrable system whose solutions are
of hyperelliptic type. We show that it is a Mumford-Beauville type integrable system~\cite{Bea90,Mum84} by given its spectral curve and Lax pair, see Proposition~\ref{Prop:LAX}. Still in Section~\ref{subsec:CR}, as a natural
consequence of the $QP$-chiralization picture, we also define the
classical $R$-matrix for affine $\W$-algebras; see
Definition~\ref{dfn:AWARM}.\\

Throughout this paper, we consider a reduced smooth scheme $X$ of finite-type over the complex numbers $\C$.\\

{\it Acknowledgements.}
The author is deeply grateful to Tomoyuki Arakawa for his invaluable guidance and advice, which greatly improved this work.
The author especially thanks Takahiro Shiota for many insightful discussions and numerous suggestions.
The author is indebted to Victor Kac for helpful conversations and to Alberto De Sole for careful discussions on Poisson vertex algebras.
The author thanks Zhe Wang for weekly discussions and for patiently explaining his formalism, and thanks Matteo Casati for guidance on geometric aspects.
The author also thanks Muze Ren for his kind suggestions and for discussions in Geneva as well as in Osaka.
The author is grateful to the organizers of the workshops on vertex algebras and integrable systems
held under the EPSRC grant “Integrable models and deformations of vertex algebras via symmetric functions”,
as well as to the organizers of Representation Theory XIX, for their organization and hospitality.
The author also thanks the organizers of Poisson geometry and related topics 25 and the RIMS workshop
“Research on finite groups, algebraic combinatorics, and vertex algebras” for the opportunity to present this work.
This work was supported by JST SPRING, Grant Number JPMJSP2110, and by JSPS KAKENHI Grant Number JP21H04993.
Finally, the author thanks members of Tomoyuki Arakawa’s research group, Bohan Li and Hao Li, for their encouragement.

\section{Preliminary: Graded manifold and graded algebra}\label{Pre:I}
In this section, we recall some known results about supermanifolds and superalgebras.

\subsection{Graded algebra}\label{Sec:GA}
We recall the graded manifold and the graded algebra here. The main references are \cite{CFL06,CS11,QZ11}.

\begin{dfn}\label{dfn:GVS}
A $\Z$-graded vector space $V$ is a direct sum $V=\bigoplus_{i\in\Z}V_i$ over the complex numbers $\C$.
The $V_i$'s are called the components of $V$ of degree $i$ and the degree of a homogeneous element $a\in V$ is denoted by $|a|$.
\end{dfn}

For graded vector spaces $V, W$ the tensor product of them is a graded vector space whose degree component $r$ is given by
$(V\otimes W)_r:=\bigoplus_{p+q=r}V_p\otimes W_q.$
We can also define the exterior algebra and the symmetric algebra of a graded vector space $V$ as follows.

\begin{dfn}\label{dfn:symex}
The symmetric and exterior algebras of a graded vector space $V$ are defined respectively as
\[
S^\bullet(V):=T(V)/I_s,\qquad \bigwedge\nolimits^{\bullet}(V):=T(V)/I_\wedge,
\] 
where $T(V)=\bigoplus_{n\geq0}V^{\otimes n}$ is the tensor algebra of $V$ and $I_s$ (resp.\ $I_\wedge$) is the two-sided ideal generated by the elements of the form
$a\otimes b-(-1)^{|a||b|}b\otimes a$ (resp.\ $a\otimes b+(-1)^{|a||b|}b\otimes a$).
The images of $V^{\otimes n}$ in $S^\bullet(V)$ and $\bigwedge^\bullet(V)$ are denoted by $S^n(V)$ and $\bigwedge^n(V)$, respectively.
\end{dfn}

Let $V$ be a graded vector space over $\C$. For an integer $n$ we write $V[n]$ for the degree-shift of $V$; explicitly,
$V[n]:=\bigoplus_{i\in\Z} (V[n])_{i}$ where $(V[n])_i:=V_{n+i}$.

\begin{dfn}\label{dfn:GLA}
A graded Lie algebra (GLA) of degree $n$ is a graded vector space $A$ endowed with a graded Lie bracket on $A[n]$ such that the bracket can be seen as a degree $-n$ Lie bracket on $A$, i.e., as a bilinear operation
\begin{equation}\label{eq:GLA1}
\{-,-\}:A\times A\rightarrow A[-n],
\end{equation}
such that graded anti-symmetry and the graded Jacobi identity hold:
\begin{align}\label{eq:GLA2}
&\{a,b\}=-(-1)^{(|a|+n)(|b|+n)}\{b,a\},\\
&\{a,\{b,c\}\}=\{\{a,b\},c\}+(-1)^{(|a|+n)(|b|+n)}\{b,\{a,c\}\},
\end{align}
for all homogeneous $a,b,c\in A.$
\end{dfn}

In particular, a Lie superalgebra $\mathfrak{g}$ is a special case of a graded Lie algebra whose bracket has degree $0$ \cite{Kac771}.
Note that there exists a canonical d\'ecalage isomorphism
$S^n\bigl(V[1]\bigr)\cong\bigl(\bigwedge^n V\bigr)[n].$

\begin{dfn}\label{dfn:GPA}
A graded Poisson algebra of degree $n$ (or an $n$-Poisson algebra) is a triple $(A,\circ,\{-,-\})$ consisting of a graded vector space
$A=\bigoplus_{i\in\Z}A_i$ endowed with a degree-zero graded commutative product and a Lie bracket of degree $-n.$
For homogeneous $a,b,c\in A$ the bracket satisfies the graded Leibniz rule
$\{a,b\circ c\}=\{a,b\}\circ c+(-1)^{|b|(|a|+n)}\,b\circ\{a,c\}.$
When $n=1$, we call such an algebra a Gerstenhaber algebra and the bracket $\{-,-\}$ is called the Nijenhuis-Schouten bracket.
Also, if $n=0$, it is the usual graded Poisson algebra.
\end{dfn}

For the $\Z_2$-graded case, one simply says even or odd Poisson algebra. In this situation, an even Poisson algebra coincides with a Poisson superalgebra.

\begin{exa}\label{exa:ShiftPA}
Let $\mathfrak{g}$ be a graded Lie algebra of degree $0$.
Put $A:=S^\bullet(\mathfrak{g}[n])$ whose multiplication is the one naturally induced from the tensor algebra $T(\mathfrak{g}[n])$.
The Poisson bracket is specified as follows.
On the linear component $S^1(\mathfrak{g}[n])$ it is the suspension of the bracket on $\mathfrak{g}$; for homogeneous elements $a,b,c\in A$, we extend it by the graded Leibniz rule
\[
\{a,b\circ c\}=\{a,b\}\circ c+(-1)^{|b|(|a|+n)}\,b\circ\{a,c\}.
\]
Hence $S^\bullet(\mathfrak{g}[n])$ is an $n$-Poisson algebra.
\end{exa}

\begin{exa}\label{exa:osp}
Consider Lie superalgebra $\fg=\mathfrak{osp}(1|2)$, then $\SB\fg[1]$ has a Gerstenhaber algebra structure.
\end{exa}

By shifting the degree of a Gerstenhaber algebra $(V^\bullet,\{-,-\},\cdot)$, we can reformulate its definition as that of a graded Lie algebra equipped with a compatible Leibniz rule (see \cite{LGCP}). In Section \ref{Sec:SSSPVA}, we follow the conventions of \cite{DZ01}, which are standard in the theory of integrable systems.

\begin{rmk}\label{rmk:GeA}
By shifting the degree of the vector space, for a Gerstenhaber algebra $(V^\bullet,\{-,-\},\cdot)$,
we define $[a,b]:=(-1)^{|a|-1}\{a,b\}$. Then the graded skew-symmetry and graded Leibniz rule can be written as
\[
[b,a]=(-1)^{|a||b|}[a,b],\qquad
[c,a\cdot b]=[c,a]\cdot b+(-1)^{|a|(|c|+1)}\,a\cdot[c,b].
\]
See \cite[(2.1.13)-(2.1.14)]{DZ01,Get02} and \cite[Section 3.3.2]{LGCP}.
\end{rmk}

\begin{rmk}
This shows that one can regard the Gerstenhaber algebra as a generalization of graded Poisson algebra.
\end{rmk}

\subsection{Graded manifold}
In this section, we recall basic notions on graded manifolds and graded schemes.
Our main references are \cite{QZ11,CFL06,CS11}. For connections to vertex algebras and Poisson vertex algebras, see \cite{Ara12,Li21}.

\begin{dfn}
A \emph{$\mathbb Z$-graded scheme} is a pair $(X,\OO_X=\bigoplus_{n\in\Bbb Z}\OO_X^n)$ such that
$(X,\OO_X^0)$ is a scheme and there exists an affine open cover $X=\bigcup U_i$ with
$U_i=\Spec A_i$ and isomorphisms of graded $\OO_{U_i}$-algebras
\[
\OO_X|_{U_i}\;\cong\;\widetilde{B^{(i)}},
\]
where each $B^{(i)}=\bigoplus_{n\in\Bbb Z}B^{(i),n}$ is a (finitely generated) $\Bbb Z$-graded, graded-commutative
$A_i$-algebra with $B^{(i),0}\cong A_i$ (hence $\OO_X^0|_{U_i}\cong\OO_{U_i}$), and
each $\OO_X^n$ is a quasi-coherent $\OO_X^0$-module.
\end{dfn}

\begin{exa}\label{SS1}
Let $R^{n|m}=\C[x^1,x^2,\cdots,x^n,\theta_1,\theta_2,\cdots,\theta_m]$ be a polynomial algebra where $x^i,i=1,2,\cdots,n$ are even variables and $\theta_j,j=1,2,\cdots,m$ are odd variables, then $\mathbb{A}^{n|m}=(\Spec(R^{n|m}),R^{n|m})$ is a quasi-compact affine superscheme of dimension $n|m.$
\end{exa}

\begin{exa}\label{exa:TB}
A basic example of a graded (and hence super) manifold is the shifted
tangent bundle $T[1]X$.
Let $X$ be a finite-dimensional smooth manifold with local coordinates
$x^i$, $i=1,\dots,n$.
On $T[1]X$ we use local coordinates $(x^i,\theta^i)$, where $x^i$ have
degree $0$ and $\theta^i$ have degree $1$ (hence are odd).
On an overlapping chart with coordinates $\tilde x^i = \tilde x^i(x)$, the
gluing rules are
\[
  \tilde x^i = \tilde x^i(x), \qquad
  \tilde\theta^i = \frac{\partial \tilde x^i}{\partial x^j}\,\theta^j,
\]
so that the $\theta^i$ transform as the local basis $dx^i$ of one-forms.
Consequently the algebra of global functions on $T[1]X$ is naturally
identified with the graded commutative algebra of differential forms
$\Omega^\bullet(X)$, where a $k$-form has degree $k$.
\end{exa}
Similar construction works for the cotangent bundle.

\begin{exa}\label{exa:Tstar1X}
Similarly, consider the shifted cotangent bundle $Y := T^*[1]X$.
On a smooth manifold $X$ with local coordinates $(x^\mu)$, we use
coordinates $(x^\mu,\theta_\mu)$ on $T^*[1]X$, where the $x^\mu$ have
degree $0$ and the $\theta_\mu$ have degree $1$.
On an overlapping chart with coordinates $\tilde x^\mu = \tilde x^\mu(x)$,
the gluing is given by the degree-preserving maps
\[
  \tilde x^\mu = \tilde x^\mu(x), \qquad
  \tilde\theta_\mu = \frac{\partial x^\nu}{\partial\tilde x^\mu}\,\theta_\nu.
\]
Locally, the coordinate functions generate a graded-commutative algebra
$\Gamma(Y,\OO_Y)$, whose degree-$k$ part identifies with the space
$\Gamma(X,\bigwedge\nolimits^{k} T_X)$ of $k$-vector fields on $X$.
In particular,
\[
  \Gamma(T^*[1]X,\OO_{T^*[1]X})
  \;\cong\; \Gamma\bigl(X,\bigwedge\nolimits^{\bullet} T_X\bigr).
\]
\end{exa}

\begin{exa}\label{exa:CDO}(see \cite[Example 3.5]{QZ11} and \cite[Letter~7]{severa2017})
Consider the graded cotangent bundle $T^*[2](T^*[1]X)$ and let
$Z := T^*[2](T^*[1]X)$.
Let $(x^\mu,\theta^\mu,\psi_\mu,p_\mu)$ be local coordinates of degree
$0,1,1,2$, respectively.
On an overlapping chart with coordinates $\tilde x^\mu = \tilde x^\mu(x)$,
the gluing between patches is given by the degree-preserving maps
\begin{align*}
  &\tilde x^\mu=\tilde x^\mu(x), \qquad
   \tilde\theta^\mu=\frac{\partial\tilde x^\mu}{\partial x^\nu}\,\theta^\nu,
   \qquad
   \tilde\psi_\mu=\frac{\partial x^\nu}{\partial\tilde x^\mu}\,\psi_\nu,\\
  &\tilde p_\mu
   =\frac{\partial x^\nu}{\partial\tilde x^\mu}\,p_\nu
    +\frac{\partial^2 x^\nu}{\partial\tilde x^\gamma\,\partial\tilde x^\mu}
     \,\frac{\partial\tilde x^\gamma}{\partial x^\sigma}\,
     \psi_\nu\theta^\sigma.
\end{align*}
Locally, the coordinate functions generate the graded-commutative algebra $\Gamma(Z,\OO_Z)$.
Its degree-$0$ part identifies with $\Gamma(X,\OO_X)$, and its degree-$1$ part with $\Gamma(X,T_X\oplus T_X^*)$
(cf. \cite[Example~3.5]{QZ11}).
Equipped with its canonical symplectic form of degree $2$ and a degree-$3$ Hamiltonian $\Theta$ satisfying $\{\Theta,\Theta\}=0$,
such a graded manifold encodes a Courant algebroid \cite[Letter~7]{severa2017,Roy02}.
\end{exa}

\section{Graded conformal algebras}\label{Sec:GCA}
In this section we review graded (Lie) conformal algebras and introduce chiral Gerstenhaber algebras. 

\subsection{Graded Lie conformal algebra}\label{Sec:PVAGO}
For completeness we now define a graded Lie conformal algebra $L$.
It may be interesting to compare this algebra with the dg vertex Lie algebra introduced by Caradot, Jiang, and Lin (see \cite{AJL24}).

\begin{dfn}\label{dfn:GLCA}
A graded Lie conformal algebra $L$ is a left $\Z$-graded $\C[\partial]$-module
$L=\bigoplus_{i\in\Z}L_i$, where $\partial$ is an endomorphism of $L$ of
degree $0$ together with a $\C$-bilinear operation
\[
[-_\lambda-]:L\otimes L\rightarrow\C[\lambda]\otimes_{\C}L,\quad
a\otimes b\mapsto[a_\lambda b]:=\sum_{n\geq0}\frac{\lambda^n}{n!}(a(n)b)
=\sum_{n\geq0}\lambda^{(n)}(a(n)b),
\]
where $\lambda$ is an even variable and the $\lambda$-bracket has degree 0.
(As usual, in signs $(-1)^{|a||b|}$ we use the parity of homogeneous elements, i.e. the $\Z$-degree modulo $2$.)
Moreover,
$(L,\partial,[-_\lambda-])$ \emph{satisfies} the following axioms: for all homogeneous
$a,b,c\in L$,
\begin{itemize}
\item[1.] Sesquilinearity:
$[\partial(a)_{\lambda} b]=-\lambda[a_\lambda b],\ [a_\lambda\partial(b)]=(\lambda+\partial)[a_\lambda b],$\\
\item[2.] Skew-symmetry:
$[b_\lambda a]=-(-1)^{|a||b|}[a_{-\lambda-\partial}b],$\\
\item[3.] Jacobi identity:
$[a_\lambda[b_\mu c]]=[[a_\lambda b]_{\lambda+\mu}c]+(-1)^{|a||b|}[b_\mu[a_\lambda c]].$
\end{itemize}
The coefficient $a(n)b$ is called the $n$th product of $a$ and $b.$ A graded LCA
of degree $n$ is a graded $\C[\partial]$-module $L$ endowed with a graded
$\lambda$-bracket on $L[n]$.
\end{dfn}

\begin{rmk}\label{rmk:GLCA}
Similar to the graded Lie algebra case, a graded LCA of degree $n$ can be described by
a bilinear operation $\{-_\lambda-\}:L\otimes L\to L[-n][\lambda]$ (with $\lambda$ even),
satisfying the same sesquilinearity relations as in Definition~\ref{dfn:GLCA} and,
for all homogeneous $a,b,c\in L$, the graded skew-symmetry and graded Jacobi relations:
\begin{align*}
&\{a_\lambda b\}=-(-1)^{(|a|+n)(|b|+n)}\{b_{-\lambda-\partial}a\},\\
&\{a_\lambda\{b_\mu c\}\}
 -(-1)^{(|a|+n)(|b|+n)}\{b_\mu\{a_\lambda c\}\}
 =\{\{a_\lambda b\}_{\lambda+\mu}c\}.
\end{align*}
\end{rmk}

Note the Lie conformal superalgebra is a special case of graded Lie conformal algebra. It may be interesting to compare Definition \ref{dfn:GLCA} with the graded vertex Lie algebra discussed in \cite{AJL24}.

\begin{exa}\label{exa:SCur}{\cite[Example 3.3]{FK}}
Let $\fg$ be a finite-dimensional Lie superalgebra.
Then $\Cur(\fg)=\C[\partial]\otimes\fg$ is a Lie conformal superalgebra with
$\lambda$-bracket $[a_\lambda b]=[a,b]$ for all $a,b\in\fg$, and we extend the
$\lambda$-bracket to $\Cur(\fg)$ by sesquilinearity.
\end{exa}

Here, we define a chiral Gerstenhaber algebra. It is interesting to compare this definition with the one given by Tamarkin in his ICM talk (see \cite{Ta02}), as well as with those in \cite{DSK13} and \cite{BDSH19}.

\begin{dfn}\label{dfn:CGPA}
A chiral Gerstenhaber algebra is a quintuple $(\V^\bullet,1,\cdot,\{-_{\lambda}-\},\partial)$ such that $\V^\bullet$ is a $\Z$-graded vector space, and $\partial$ is a degree $0$ endomorphism with respect to this $\Z$-grading. In addition, the product $\cdot$ is a degree $0$ graded commutative product and the bilinear operator $\{-_\lambda-\}$ is a degree $-1$ $\lambda$-bracket. Moreover, this quintuple satisfies the following axioms.
\begin{itemize}
\item[1.] $(\V^\bullet,1,\partial,\cdot)$ is a commutative, associative differential algebra whose product has degree $0$.\\
\item[2.] $(\V^\bullet,\partial,\{-_{\lambda}-\})$ is a graded Lie conformal algebra of degree $1$.
\begin{equation}\label{eq:SGJ}
\{a_{\lambda}\{b_{\mu}c\}\}=\{\{a_{\lambda}b\}_{\lambda+\mu}c\}+(-1)^{(|a|+1)(|b|+1)}\{b_{\mu}\{a_{\lambda}c\}\}.
\end{equation}\\
\item[3.] $\{-_{\lambda}-\}$ and $\cdot$ satisfy the following shifted Leibniz rule:
$\{a_{\lambda}b\cdot c\}=\{a_{\lambda}b\}c+(-1)^{(|a|+1)|b|}b\{a_{\lambda}c\}.$
\end{itemize}
We shall also call its $\lambda$-bracket the chiral Nijenhuis-Schouten bracket.
\end{dfn}

\begin{rmk}\label{rmk:CGPA}
Using the same shifted sign conventions as in Remark~\ref{rmk:GeA}, by shifting the grading on $\V^\bullet$, the graded skew-symmetry and the odd Leibniz rule may be written as
\[
\{b_{\lambda}a\}=-(-1)^{(|a|+1)(|b|+1)}\,\{a_{-\lambda-\partial}b\},\qquad
\{c_{\lambda}(a\cdot b)\}=\{c_{\lambda}a\}\cdot b+(-1)^{(|c|+1)|a|}\,a\cdot\{c_{\lambda}b\}.
\]
See, e.g., \cite{DZ01,Get02,LGCP}.
\end{rmk}

In Section~\ref{Sec:SSSPVA}, we adopt the sign conventions in Remark~\ref{rmk:CGPA}
(graded skew-symmetry and the odd Leibniz rule), which are standard in the integrable
systems literature; cf. \cite[(2.1.13)-(2.1.14)]{DZ01}.

\begin{exa}\label{exa:FLCSA}
Let $L:=\Cur(\mathfrak g)$ as in Example~\ref{exa:SCur}. 
Then $\SB(L[1])$ is a chiral Gerstenhaber algebra. In particular, the odd Leibniz rule reads
\[
\{a_{\lambda}(b\cdot c)\}=\{a_\lambda b\}\cdot c+(-1)^{(|a|+1)|b|}\,b\cdot\{a_{\lambda}c\}.
\]
\end{exa}

Similar to Lemma 8 in \cite{K1} and to the corresponding proposition in \cite{BDSK09}, we have the following proposition.

\begin{prop}\label{Prop:CGA}
Let $(\V^\bullet,\{-_{\lambda}-\},\partial,\cdot)$ be a chiral Gerstenhaber algebra.
Define $\hFcl^\bullet:=\V^\bullet/\partial\V^\bullet$ and let $\int:\V^\bullet\to\hFcl^\bullet$ be the quotient map.
Then the following brackets are well-defined:
\begin{itemize}
\item[(1)] $\hFcl^\bullet\times\hFcl^\bullet\to\hFcl^\bullet,\quad
\{\int a,\int b\}:=\int\{a_{\lambda}b\}\big|_{\lambda=0}$,
\item[(2)] $\hFcl^\bullet\times\V^\bullet\to\V^\bullet,\quad
\{\int a,b\}:=\{a_{\lambda}b\}\big|_{\lambda=0}$.
\end{itemize}
Moreover, \textup{(1)} defines a graded Lie bracket on $\hFcl^\bullet$, and \textup{(2)} defines a representation of the graded Lie algebra $\hFcl^\bullet$ on $\V^\bullet$ by derivations whose degree is shifted by $1$; these derivations act on both the product and the $\lambda$-bracket of $\V^\bullet$ and commute with~$\partial$.
Here, “degree shifted by $1$” means that the bracket in \textup{(2)} satisfies
\[
\{\int a,bc\}=\{\int a,b\}\,c+(-1)^{|b|(|a|+1)}\,b\,\{\int a,c\}.
\]
\end{prop}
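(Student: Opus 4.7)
The plan is to descend the chiral Gerstenhaber bracket to the quotient $\hFcl^\bullet$ by applying integration and specialization at $\lambda=0$, following the standard PVA argument (analogous to Lemma~8 of \cite{K1} and the corresponding result in \cite{BDSK09}) adapted to the shifted sign conventions of Remark~\ref{rmk:CGPA}. Two ingredients from Definition~\ref{dfn:CGPA} do all the work: sesquilinearity, giving $\{(\partial a)_\lambda b\}=-\lambda\{a_\lambda b\}$ and $\{a_\lambda\partial b\}=(\lambda+\partial)\{a_\lambda b\}$; and the shifted Jacobi identity \eqref{eq:SGJ} together with the odd Leibniz rule.

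First I would establish well-definedness of both maps. For (1) and (2), replacing $a$ by $\partial a$ and setting $\lambda=0$ kills the bracket by the first sesquilinearity identity, so descent in the first slot is immediate. For (1), replacing $b$ by $\partial b$ and setting $\lambda=0$ yields $\partial\{a_\lambda b\}|_{\lambda=0}\in\partial\V^\bullet$, which is annihilated by $\int$; hence the formula only depends on the class of $b$ modulo $\partial\V^\bullet$. Note that in (2) we do not quotient the second slot, so no condition there is needed.

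Next I would verify the graded Lie structure on $\hFcl^\bullet$. Graded skew-symmetry follows by integrating $\{a_\lambda b\}=-(-1)^{(|a|+1)(|b|+1)}\{b_{-\lambda-\partial}a\}$ at $\lambda=0$: the expansion $\{b_{-\partial}a\}=\sum_{n\ge0}\frac{(-\partial)^n}{n!}\,b_{(n)}a$ contributes to $\int$ only its $\partial^0$ term, since all higher terms are total derivatives; what remains is exactly $-(-1)^{(|a|+1)(|b|+1)}\int\{b_\mu a\}|_{\mu=0}$. For the graded Jacobi identity I would apply $\int$ to \eqref{eq:SGJ} and set $\lambda=\mu=0$; the middle term on the right, whose outer $\lambda+\mu$ dependence disappears after specialization, reduces to $\{\int\{a_0 b\},c\}$ after integrating. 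The odd Leibniz rule at $\lambda=0$ gives the graded derivation property of (2) with respect to the commutative product, while \eqref{eq:SGJ} at $\lambda=0$ with $\mu$ kept free gives the graded derivation property of $\{\int a,-\}$ acting on the $\lambda$-bracket; commutation with $\partial$ is the second sesquilinearity identity evaluated at $\lambda=0$. Finally, specializing the same identity at $\mu=0$ yields the graded Lie representation property $\{\int a,\{\int b,c\}\}-(-1)^{(|a|+1)(|b|+1)}\{\int b,\{\int a,c\}\}=\{\{\int a,\int b\},c\}$.

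The main bookkeeping obstacle is tracking the $1$-shift in all sign factors. In particular, the action $\{\int a,-\}$ must be treated as an operator of effective degree $|a|+1$ (not $|a|$) in the Leibniz sign $(-1)^{|b|(|a|+1)}$; this is precisely the shift encoded in Remark~\ref{rmk:CGPA}, and without it the identities produced by setting $\lambda=0$ do not close up into a graded Lie representation by derivations. Once this sign convention is installed, every axiom of a graded Lie algebra and of a Lie module by derivations follows from the corresponding chiral axiom by setting $\lambda=0$ (and $\mu=0$ where appropriate) and using only the two sesquilinearity identities to recognize $\partial$-exact terms that vanish under $\int$.
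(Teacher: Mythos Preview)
Your proposal is correct and follows essentially the same route as the paper's proof: well-definedness from sesquilinearity, then the graded Lie axioms and the representation/derivation properties by specializing the chiral Gerstenhaber axioms at $\lambda=0$ (and $\mu=0$). Your write-up is in fact more explicit than the paper's, which only sketches these steps.
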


\begin{proof}
First, by left and right \emph{sesquilinearity}, the brackets in \textup{(1)} and \textup{(2)} are well-defined.

For all $\int a,\int b,\int c\in\hFcl^\bullet$, the graded skew-symmetry and the graded Jacobi identity of the chiral Gerstenhaber algebra imply that
\[
\{\int a,\int b\}=-(-1)^{(|a|+1)(|b|+1)}\{\int b,\int a\},
\]
and
\[
\{\int a,\{\int b,\int c\}\}=\{\{\int a,\int b\},\int c\}
+(-1)^{(|a|+1)(|b|+1)}\{\int b,\{\int a,\int c\}\},
\]
which shows that $\hFcl^\bullet$, together with the first bracket, is a graded Lie algebra.

We now show that the bracket in \textup{(2)} defines a representation of this graded Lie algebra.
By the above, \textup{(1)} defines a graded Lie algebra on $\hFcl^\bullet$.
Let
\[
\rho:\hFcl^\bullet\to\End(\V^\bullet),\qquad
\rho(\int a)(b):=\{\int a,b\}.
\]
By the shifted graded Jacobi identity~\eqref{eq:SGJ}, we obtain
\[
\rho(\{\int a,\int b\})=\rho(\int a)\circ\rho(\int b)
-(-1)^{(|a|+1)(|b|+1)}\,\rho(\int b)\circ\rho(\int a)\,,
\]
and
\[
\rho(\int a)(\{b_\mu c\})
=\{(\rho(\int a)b)_\mu c\}+(-1)^{(|a|+1)(|b|+1)}\{b_\mu\rho(\int a)c\},
\]
for all $a,b,c\in\V^\bullet$. Also, the shifted graded Leibniz rule implies that, for all $a,b,c\in\V^\bullet$,
$\{\int a,bc\}=\{\int a,b\}c+(-1)^{(|a|+1)|b|}b\{\int a,c\}.$
Finally, by sesquilinearity of $\V^\bullet$, we conclude the commutativity of $\partial$ with $\{\int a,-\}$ for all $a\in\V^\bullet$.
\end{proof}

\section{Shifted $1$-symplectic structure and Poisson vertex algebra}\label{Sec:SSSPVA}
In this section, we present our main theorem. 
We begin by recalling a theorem of Arakawa \cite{Ara12} (generalized by Li \cite{Li21}) on the Poisson vertex algebra structure on arc spaces. 
After that, we fix conventions for the geometric setup used later in this section.
\begin{thm}[{\cite{Ara12,Li21}}]\label{thm:level0}
Let \(X=\Spec R\) be an affine Poisson \emph{superscheme} (so \(R\) is a Poisson superalgebra).
Then the arc space \(J_\infty X\) carries a unique Poisson vertex superalgebra structure on
\[
R_\infty:=\OO(X_\infty)
\]
characterized by
\begin{equation}\label{eq:level0}
\{a_{\lambda} b\}=\{a,b\}, \qquad \text{for all } a,b\in R.
\end{equation}
\end{thm}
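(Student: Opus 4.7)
The plan is to first construct the $\lambda$-bracket on a free polynomial model, verify the PVA axioms there, and then descend to $R_\infty$ along a Poisson quotient presentation of $R$. Write $R = S/I$, where $S$ is a (super) polynomial algebra $\C[x^i]_{i\in \mathcal{I}}$ on a set of Poisson generators of $R$ and $I\subset S$ is a Poisson ideal. The arc algebra $S_\infty = \C[x^i_{(n)}]_{i\in\mathcal{I},\,n\geq 0}$ carries the canonical derivation $\partial$ with $\partial x^i_{(n)} = x^i_{(n+1)}$, and $R_\infty = S_\infty/(I)_\infty$, where $(I)_\infty$ denotes the $\partial$-stable (differential) ideal generated by $I$.

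On $S_\infty$ I set $\{x^i{}_\lambda x^j\} := \{x^i, x^j\}$, constant in $\lambda$, and extend by the sesquilinearity axioms of Definition~\ref{dfn:GLCA} together with the left and right Leibniz rules; this produces the explicit chiral Master Formula in the sense of De Sole-Kac,
\begin{equation*}
\{a{}_\lambda b\} \;=\; \sum_{m,n\geq 0}\frac{\partial b}{\partial x^j_{(n)}}\,(\lambda+\partial)^{n}\,\{x^i,x^j\}\,(-\lambda-\partial)^{m}\,\frac{\partial a}{\partial x^i_{(m)}},
\end{equation*}
with the convention that the $\partial$'s act to the right. Sesquilinearity and both Leibniz rules hold by construction. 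For skew-symmetry and Jacobi, I would invoke the standard extension lemma for $\lambda$-brackets on commutative differential algebras: these identities need only be checked on a set of differential generators, where they reduce---after stripping off $\lambda$'s and $\partial$'s by sesquilinearity---to the ordinary Poisson skew-symmetry and Poisson Jacobi identity in $R$.

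To descend to $R_\infty$, observe that $I$ being a Poisson ideal of $S$ gives $\{I,S\}\subset I$; combined with sesquilinearity and the Leibniz rule, this shows $\{(I)_\infty{}_\lambda S_\infty\}\subset (I)_\infty[\lambda]$, so $(I)_\infty$ is a PVA ideal of $S_\infty$. Hence the $\lambda$-bracket descends to $R_\infty$ and, by construction, satisfies \eqref{eq:level0}. Uniqueness is then immediate: any PVA structure on $R_\infty$ restricting to the prescribed Poisson bracket on $R$ is forced onto the Master Formula above by sesquilinearity and the Leibniz rules, and therefore coincides with the one just constructed. The main technical obstacle is the Jacobi extension step; the cleanest route is the induction on the total derivative order and the product length used in \cite{BDSK09}, exploiting the fact that the symmetrised tri-$\lambda$-bracket is a derivation in each argument, so its vanishing need only be tested on generators---where it becomes the classical Jacobi identity in $R$.
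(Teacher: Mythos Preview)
The paper does not supply its own proof of this theorem; it is quoted from \cite{Ara12,Li21} as background. Your proposal follows the standard argument of those references together with the Master-Formula machinery of \cite{BDSK09}, and is correct in substance.

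One logical wrinkle to clean up: you verify skew-symmetry and Jacobi on $S_\infty$ and say they ``reduce to the ordinary Poisson \ldots\ identity in $R$''. But on $S_\infty$ the Jacobiator on generators $x^i,x^j,x^k$ is an element of $S$, not of $R$, and since your lifted brackets $\{x^i,x^j\}\in S$ need not satisfy Jacobi in $S$ (only modulo $I$), $S_\infty$ is generally not a PVA---it is only a ``pre-PVA'' with sesquilinearity, both Leibniz rules, and (with skew-chosen lifts) skew-symmetry. The fix is simply to reorder the steps: first show that the $\lambda$-bracket on $S_\infty$ descends to $R_\infty$ (your argument that $(I)_\infty$ is a two-sided bracket ideal is fine, since $\pi\{a,b\}_S=\{\pi a,\pi b\}_R$ by the biderivation property), and \emph{then} invoke the extension lemma on $R_\infty$, where the check on differential generators genuinely lands in $R$ and reduces to the Poisson axioms there. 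With that adjustment the proof is complete and matches the cited literature.
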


This implies the following proposition.
\begin{prop}\label{Prop:GeoCGA}
Let \(X=\Spec R\) be an affine scheme, where \(R\) is a Gerstenhaber algebra.
Then there exists a unique chiral Gerstenhaber algebra structure on
\[
R_\infty:=\OO(X_\infty)
\]
characterized by
\begin{equation}\label{eq:CGA}
\{a_\lambda b\}=\{a,b\},\qquad \text{for all } a,b\in R.
\end{equation}
\end{prop}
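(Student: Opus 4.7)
The plan is to parallel the proof of Theorem \ref{thm:level0} in the shifted graded setting. Since a Gerstenhaber algebra is, up to the sign conventions of Remark \ref{rmk:GeA}, a graded Poisson algebra whose bracket has degree $-1$, one expects the Arakawa-Li construction to produce a $\lambda$-bracket on $R_\infty$ of the same degree $-1$, that is, exactly a chiral Gerstenhaber structure in the sense of Definition \ref{dfn:CGPA}.

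Concretely, I would equip $R_\infty$ with its canonical graded-commutative differential algebra structure, extending the $\Z$-grading of $R$ by declaring $\partial$ to have degree $0$. The $\lambda$-bracket is then defined via the Master Formula: for $a,b\in R$ one sets $\{a_\lambda b\}:=\{a,b\}$ (a polynomial of degree $0$ in $\lambda$), and for arbitrary elements of $R_\infty$ one extends by sesquilinearity
\[
\{(\partial^m a)_\lambda(\partial^n b)\}=(-\lambda)^m(\lambda+\partial)^n\{a,b\}
\]
together with the shifted Leibniz rule in each slot. Because $R_\infty$ is generated as a differential algebra by $R$, this prescription is forced, which simultaneously yields uniqueness; the remaining content is to check that it is well-defined on $R_\infty$ and that the chiral Gerstenhaber axioms hold globally.

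Sesquilinearity and the shifted Leibniz rule are built in by construction, and the $\lambda$-bracket has degree $-1$ since the input bracket on $R$ does. Graded skew-symmetry (in the shifted convention of Remark \ref{rmk:CGPA}) on generators follows from the corresponding property on $R$ and propagates to all of $R_\infty$ by a standard computation using sesquilinearity. The delicate point is the shifted graded Jacobi identity \eqref{eq:SGJ}: it holds on triples $a,b,c\in R$ by the Gerstenhaber Jacobi identity for $R$, and I would extend it to all of $R_\infty$ via the familiar argument that both sides of the Jacobi identity are, in the shifted graded sense, derivations in each of the three arguments (using sesquilinearity for the $\partial$-slots and the shifted Leibniz rule for the product slots), so agreement on generators forces agreement everywhere.

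I expect the main obstacle to be the careful bookkeeping of signs in the shifted convention (Remark \ref{rmk:CGPA}): verifying that the parities and degree shifts produced by the Master Formula line up exactly with those demanded by the chiral Gerstenhaber axioms. This is essentially the same calculation as in \cite{Ara12,Li21}, but with the additional degree-$-1$ shift that distinguishes the Gerstenhaber case from the ordinary Poisson case; in particular, the verification of skew-symmetry and Jacobi on generators relies on the graded conventions having been fixed compatibly between $R$ and $\V^\bullet$. Once this bookkeeping is settled, uniqueness is immediate since sesquilinearity and the shifted Leibniz rule determine any chiral Gerstenhaber bracket from its restriction to the generating subalgebra $R$.
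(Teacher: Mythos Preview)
Your proposal is correct, but it takes a longer route than the paper. The paper's proof is a single sentence: it does not re-run the Master Formula argument in the shifted setting, but instead invokes Theorem~\ref{thm:level0} as a black box and observes that, because the translation operator $\partial$ has degree~$0$, the degree-$(-1)$ shift that turns a Poisson superalgebra into a Gerstenhaber algebra (and a Poisson vertex superalgebra into a chiral Gerstenhaber algebra) is transparent to the entire construction. In other words, the axioms you propose to verify by hand are already verified in \cite{Ara12,Li21} for the unshifted case, and the shift changes only the bookkeeping of signs, not the substance of any identity, precisely because $\partial$ is even.

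Your approach has the advantage of being self-contained and making explicit where each axiom comes from; the paper's approach buys brevity by noting that the shifted and unshifted verifications are literally the same computation once $\partial$ is declared to have degree~$0$. If you want to match the paper's style, you could compress your argument to: a Gerstenhaber algebra is a Poisson superalgebra after the d\'ecalage shift, Theorem~\ref{thm:level0} then applies, and since $\partial$ is even the resulting $\lambda$-bracket still has degree~$-1$, hence satisfies the chiral Gerstenhaber axioms of Definition~\ref{dfn:CGPA}.
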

\begin{proof}
Since the translation operator \(\partial\) is even, the axioms of a chiral Gerstenhaber algebra follow directly from those of a level-$0$ Poisson vertex algebra.
\end{proof}
\subsection{Conventions}\label{Subsec:Conventions}
Now, we fix conventions. We mainly follow the notation of \cite[Chs.~1 and~4]{AraM24}.

We first fix notation.
Let \(X=\Spec R\) be a smooth affine scheme.
Denote its coordinate ring by \(\A_0:=\Gamma(X,\OO_X)\), and set \(Y:=T^*[1]X\) to be the degree-shifted cotangent bundle of \(X\).
Since $X$ is smooth, for any $x\in X$ there exists an affine open neighbourhood $U\ni x$
and a local coordinate system $\{u^i,\partial_i\}_{i=1}^n$ on $U$ in the sense of
\cite[Appendix~A.5]{HTT08}, i.e. $[\partial_i,\partial_j]=0$ and $\partial_i(u^j)=\delta_{ij}$.
In particular, $u=(u^1,\dots,u^n):U\to \BBA^n$ is \'{e}tale.
Let $\theta_1,\dots,\theta_n$ be the parity-reversed fiber coordinates dual to
$du^1,\dots,du^n$ (equivalently, corresponding to $\partial_1,\dots,\partial_n$).

Write \(\hatA_0:=\Gamma(Y,\OO_{Y})\) for the algebra of functions on \(Y\).
It has a natural decomposition
\[
\hatA_0=\bigoplus_{p=0}^n\hatA_0^p.
\]
\'Etale locally, \(\hatA_0^p\) consists of functions of the form
\[
P=\sum P^{\alpha_1\cdots\alpha_p}\,\theta_{\alpha_1}\theta_{\alpha_2}\cdots\theta_{\alpha_p},
\]
with the coefficients \(P^{\alpha_1\cdots\alpha_p}\) totally skew-symmetric in the indices \((\alpha_1,\dots,\alpha_p)\).

Passing to jet schemes, fix an affine open neighbourhood $U\subset X$
equipped with an étale coordinate map $u=(u^1,\dots,u^n):U\to\BBA^n$ as above.
We set
\[
\A_U:=\Gamma\bigl(U,(\pi_{\infty,0})_*\OO_{X_\infty}\bigr),\qquad
\hatA_U:=\Gamma\bigl(T^*[1]U,(\pi_{\infty,0})_*\OO_{Y_\infty}\bigr).
\]

We now give a geometric example of a chiral Gerstenhaber algebra.
\begin{exa}\label{Exa:GCGA}
Let \(X\) be an affine scheme over \(\C\), and set \(Y:=T^*[1]X\).
We use the same notation as in Example~\ref{exa:TB}.
By Proposition~\ref{Prop:CGA}, on an \'etale open subset \(U\subset X\), the section \(\hatA_U\) carries a natural structure of a chiral Gerstenhaber algebra with \(\lambda\)-bracket
\[
\{\,u^\alpha{}_\lambda \theta_\beta\,\}=\delta^\alpha{}_{\beta},
\]
extended to the algebra by the (shifted) Leibniz rule.
\end{exa}

Therefore, following the notation of Section~\ref{Subsec:Conventions}, we obtain a chiral Gerstenhaber algebra
\[
\V^\bullet := \hatA_U
\]
with \(\lambda\)-bracket as in Example~\ref{Exa:GCGA}.
Let \(\partial\) denote the translation operator on \(\V^\bullet\).
We set
\[
\hFcl^\bullet \;:=\; \V^\bullet / \partial \V^\bullet
\]
and call \(\hFcl^\bullet\) the space of (classical) local functionals.
We write
\[
\int : \V^\bullet \twoheadrightarrow \hFcl^\bullet
\]
for the quotient map, and for \(P\in \V^\bullet\) we denote its class in \(\hFcl^\bullet\) by \(\int P\).
By Proposition~\ref{Prop:CGA}, the brackets from Proposition~\ref{Prop:CGA} endow \(\hFcl^\bullet\) with the structure of a graded Lie algebra,
\[
[-,-] : \hFcl^\bullet \otimes \hFcl^\bullet \longrightarrow \hFcl^\bullet,
\]
and make \(\V^\bullet\) a representation of \(\hFcl^\bullet\) by derivations.
In the following, we work on an \'etale neighbourhood \(U=\BBA^n\) for some \(n\in\mathbb N\), and we simply denote \(\A_U\) and \(\hatA_U\) by \(\A\) and \(\hatA\), respectively.

\subsection{Normalizing operator and normalized expression}\label{Sec:NOE}
We recall the normalizing operator \(\N\), first introduced by A.~Barakat in her Ph.D.\ thesis~\cite[Section~2.6]{Bar05}; see also~\cite[Lemma~2.3.6]{LZ11}.
For \(P=\int \tilde P\in\hFcl^{p}\) we set
\[
\N(P)\;:=\;\sum_{\alpha}\theta_{\alpha}\,\frac{\delta \tilde P}{\delta\theta_{\alpha}}.
\]
One checks that for every \(P=\int \tilde P\in\hFcl^{p}\),
\[
\int \N(P)\;=\;p\,P.
\]
Thus \(\N\) provides a canonical normalized representative of each class in \(\hFcl^{p}\).

In particular, let
\[
P \;=\;\int\tilde{P}
\;=\;\int\sum_{s,t} P^{\alpha\beta}_{st}\,\theta_{\alpha}^{s}\theta_{\beta}^{t}
\;\in\;\hFcl^{2}.
\]
After applying \(\N\) and integrating by parts we obtain a normalized density of the form
\begin{equation}\label{Eq:nor}
\tilde P_{\mathrm{norm}}
\;=\;\frac12\sum_{\alpha,\beta,s}\theta_\alpha\bigl(\PD^{\alpha\beta}_s\partial^s\bigr)\theta_\beta.
\end{equation}
In the notation of integrable systems, this is encoded by the matrix differential operator
\[
\PD(\partial) \;=\; \bigl(\PD^{\alpha\beta}(\partial)\bigr)_{\alpha,\beta},
\qquad
\PD^{\alpha\beta}(\partial) \;=\; \sum_s \PD^{\alpha\beta}_s\,\partial^s.
\]

In the sequel we will always work with such normalized representatives, obtained from differential polynomials \(\tilde P\in\hat\A^2\).

As observed by Casati and Wang in~\cite[p.~1507]{CaW20} in the difference setting, and as explained to the author by M.~Casati (personal communication),
the graded commutativity of the odd variables \(\theta_\alpha\) and of the trace/integral implies that only the skew-symmetric part of \(\PD(\partial)\) contributes to the class \(P\).
We follow the notation of Section~\ref{Subsec:Conventions}; the main theorem below gives an explicit description of the graded Lie algebra structure on \(\hFcl^\bullet\).

\begin{thm}\label{Thm:NRB}
For any \(P\in\hatA^{p}\) and \(Q\in\hatA^{q}\), the graded Lie algebra structure on \(\hFcl^\bullet\), induced by the shifted symplectic structure on \(T^*[1]X\) and the shifted Leibniz rule (see Example~\ref{Exa:GCGA}), is given by
\[
\{\!\int P,\int Q\!\}
=\int \frac{\delta P}{\delta\theta_\sigma}\frac{\delta Q}{\delta u^\sigma}
+(-1)^{p}\frac{\delta P}{\delta u^\sigma}\frac{\delta Q}{\delta\theta_\sigma}.
\]
\end{thm}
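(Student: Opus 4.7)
The plan is to work \'{e}tale-locally in the chart fixed in Section~\ref{Subsec:Conventions} and mimic the classical derivation of the Schouten-Nijenhuis bracket on $\Gamma(X,\bigwedge^\bullet T_X)\cong\Gamma(T^*[1]X,\OO_{T^*[1]X})$, with ordinary partial derivatives replaced by variational ones. Indeed, the identity in the statement is the faithful chiralization of the Schouten-Nijenhuis formula
\[
\{P,Q\}_{\mathrm{SN}}=\frac{\partial P}{\partial\theta_\sigma}\frac{\partial Q}{\partial u^\sigma}+(-1)^p\frac{\partial P}{\partial u^\sigma}\frac{\partial Q}{\partial\theta_\sigma},
\]
with the $(-1)^p$ inherited from the same Koszul bookkeeping; this analogy both suggests the answer and dictates the strategy. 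I would fix normalized differential-polynomial representatives $\tilde P,\tilde Q\in\hatA$ of $\int P,\int Q$ via the operator $\N$ of Section~\ref{Sec:NOE}, and then proceed in two steps: (i) derive a closed form for $\{\tilde P{}_\lambda\tilde Q\}\big|_{\lambda=0}$ modulo total derivatives, and (ii) convert the resulting partial derivatives into variational derivatives by integration by parts inside $\hFcl^\bullet$.

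For step (i), from the sesquilinearity and the shifted Leibniz rule of Definition~\ref{dfn:CGPA} I would establish a chiral-Gerstenhaber Master Formula
\[
\{\tilde P{}_\lambda\tilde Q\}=\sum_{i,j,m,n}\frac{\partial\tilde Q}{\partial y_j^{(n)}}(\lambda+\partial)^n\{y_i{}_{\lambda+\partial}y_j\}_{\rightarrow}(-\lambda-\partial)^m\frac{\partial\tilde P}{\partial y_i^{(m)}},
\]
where $(y_i)$ runs over the generators $(u^\alpha,\theta_\alpha)$ and the omitted Koszul signs reflect the odd parity of $\theta$. This is the direct analogue of the Barakat-De~Sole-Kac master formula for PVAs and follows by induction on the polynomial degree of $\tilde P$ and $\tilde Q$. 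In our situation (Example~\ref{Exa:GCGA}) the only non-zero primitive $\lambda$-brackets are $\{u^\alpha{}_\mu\theta_\beta\}=\delta^\alpha_\beta$ and its graded-skew partner, both independent of $\mu$; hence only the two index patterns $(y_i,y_j)=(u^\sigma,\theta_\sigma)$ and $(\theta_\sigma,u^\sigma)$ contribute. Setting $\lambda=0$ and invoking $\int A\,\partial^n B=\int(-\partial)^nA\cdot B$ in $\hFcl^\bullet$ to move each outer $\partial^n$ onto the left factor, the two surviving contributions are recognized as products of variational derivatives $\frac{\delta}{\delta z}=\sum_m(-\partial)^m\frac{\partial}{\partial z^{(m)}}$, producing precisely the two summands claimed in the theorem.

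The main obstacle is sign bookkeeping, since three distinct sources must be reconciled: the Koszul signs produced when pushing $\partial/\partial\theta_\sigma^{(m)}$ through the $p-1$ remaining $\theta$-factors of $\tilde P\in\hatA^p$; the sign from the graded skew-symmetry of Remark~\ref{rmk:CGPA} relating $\{\theta_\alpha{}_\mu u^\beta\}$ to $\{u^\beta{}_{-\mu-\partial}\theta_\alpha\}$; and the signs appearing in super integration-by-parts. A direct parity count shows that these combine in the second summand to the global factor $(-1)^p$, while the first summand is sign-neutral. Once the bookkeeping is organized consistently with the conventions of Remark~\ref{rmk:CGPA}, both sides agree as elements of $\hFcl^\bullet$, completing the proof.
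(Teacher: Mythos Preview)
Your proposal is correct and follows essentially the same approach as the paper: both expand $\{P_\lambda Q\}$ via a Master Formula for the chiral Gerstenhaber bracket, observe that only the primitive brackets $\{u^\alpha{}_\mu\theta_\beta\}=\delta^\alpha_\beta$ contribute, set $\lambda=0$, and then integrate by parts in $\hFcl^\bullet$ to convert the resulting expressions into variational derivatives. The paper's proof is terser on the sign bookkeeping and works directly with differential monomials rather than passing through the normalizing operator $\N$ (which is unnecessary here, since the identity is between classes in $\hFcl^\bullet$ and holds for any representatives), but the underlying argument is the same; the paper also notes, as an alternative, that one can invoke the uniqueness statement of \cite[Theorem~2.4.1]{LZ11} together with Proposition~\ref{Prop:CGA}.
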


\begin{proof}
In this proof, we adopt the following conventions: Greek indices are summed by the Einstein convention; \(m,n\) are summed explicitly.
Also, for differential monomials
\[
P=P^{\alpha_1\cdots\alpha_p}_{s_1\cdots s_p}\,\Theta^{s_1\cdots s_p}_{\alpha_1\cdots\alpha_p},
\qquad
Q=Q^{\beta_1\cdots\beta_q}_{r_1\cdots r_q}\,\Theta^{r_1\cdots r_q}_{\beta_1\cdots\beta_q},
\]
where \(\Theta^{s_1\cdots s_p}_{\alpha_1\cdots\alpha_p}=\theta_{\alpha_1}^{s_1}\theta_{\alpha_2}^{s_2}\cdots\theta_{\alpha_p}^{s_p}\) and \(\Theta^{r_1\cdots r_q}_{\beta_1\cdots\beta_q}=\theta_{\beta_1}^{r_1}\theta_{\beta_2}^{r_2}\cdots\theta_{\beta_q}^{r_q}\).
We have
\begin{align*}
\{P_\lambda Q\}
&=P^{\alpha_1\cdots\alpha_p}_{s_1\cdots s_p}\,
\bigl\{\Theta^{s_1\cdots s_p}_{\alpha_1\cdots\alpha_p}{}_{\lambda}\,
      Q^{\beta_1\cdots\beta_q}_{r_1\cdots r_q}\bigr\}\,
  \Theta^{r_1\cdots r_q}_{\beta_1\cdots\beta_q}\\
&\quad
+(-1)^{pq}\,Q^{\beta_1\cdots\beta_q}_{r_1\cdots r_q}\,
\bigl\{P^{\alpha_1\cdots\alpha_p}_{s_1\cdots s_p}{}_{\lambda}\,
      \Theta^{r_1\cdots r_q}_{\beta_1\cdots\beta_q}\bigr\}\,
  \Theta^{s_1\cdots s_p}_{\alpha_1\cdots\alpha_p}.
\end{align*}

Using the Master Formula and then setting \(\lambda=0\), we obtain
\begin{align*}
& P^{\alpha_1\cdots\alpha_p}_{s_1\cdots s_p}\,
\bigl\{\Theta^{s_1\cdots s_p}_{\alpha_1\cdots\alpha_p}{}_{\lambda}\,
      Q^{\beta_1\cdots\beta_q}_{r_1\cdots r_q}\bigr\}\,
  \Theta^{r_1\cdots r_q}_{\beta_1\cdots\beta_q}\Big|_{\lambda=0} \\
&\qquad
= P^{\alpha_1\cdots\alpha_p}_{s_1\cdots s_p}
   \sum_{m,n\ge 0}
   \frac{\partial Q^{\beta_1\cdots\beta_q}_{r_1\cdots r_q}}{\partial u^{\sigma,m}}\,
   \partial^{\,n}(-\partial)^{m}
   \!\left(\frac{\partial\Theta^{s_1\cdots s_p}_{\alpha_1\cdots\alpha_p}}{\partial\theta_\sigma^{n}}\right)
   \Theta^{r_1\cdots r_q}_{\beta_1\cdots\beta_q}.
\end{align*}
Similarly,
\begin{align*}
&(-1)^{pq}\,Q^{\beta_1\cdots\beta_q}_{r_1\cdots r_q}\,
\bigl\{P^{\alpha_1\cdots\alpha_p}_{s_1\cdots s_p}{}_{\lambda}\,
      \Theta^{r_1\cdots r_q}_{\beta_1\cdots\beta_q}\bigr\}\,
  \Theta^{s_1\cdots s_p}_{\alpha_1\cdots\alpha_p}\Big|_{\lambda=0} \\
&\qquad
= (-1)^{pq}\,Q^{\beta_1\cdots\beta_q}_{r_1\cdots r_q}
   \sum_{m,n\ge 0}
   \frac{\partial\Theta^{r_1\cdots r_q}_{\beta_1\cdots\beta_q}}{\partial\theta_\sigma^{n}}\,
   \partial^{\,n}(-\partial)^{m}
   \!\left(\frac{\partial P^{\alpha_1\cdots\alpha_p}_{s_1\cdots s_p}}{\partial u^{\sigma,m}}\right)
   \Theta^{s_1\cdots s_p}_{\alpha_1\cdots\alpha_p}.
\end{align*}

Therefore, passing to the quotient, we find
\[
\int\{P_\lambda Q\}\Big|_{\lambda=0}
=\int\frac{\delta P}{\delta\theta_\sigma}\frac{\delta Q}{\delta u^\sigma}
+(-1)^p\frac{\delta P}{\delta u^\sigma}\frac{\delta Q}{\delta\theta_\sigma}.
\]
Another way to prove it is to combine the uniqueness statement of \cite[Theorem~2.4.1]{LZ11} with Proposition~\ref{Prop:CGA}.
\end{proof}

\begin{rmk}\label{Rmk:NRB}
The same result extends to the setting of continuous Poisson vertex algebras (in the sense of De Sole-Kac-Valeri) by working in the inverse-limit differential algebra and using continuous \(\lambda\)-brackets; see \cite{DSKV24}.
See also \cite{LZ11} for related infinite-dimensional Hamiltonian/Jacobi structures.
In addition, such Poisson-vertex-algebraic/bihamiltonian structures arise naturally in Gromov-Witten and cohomological field theories; compare, for example, \cite{DZ01,BDGR20,LWZ-series}.
\end{rmk}

\begin{thm}[Correspondence Theorem]\label{Thm:CS}
Let \(X\) be a smooth, reduced, finitely generated \(\C\)-scheme, and set \(Y:=T^*[1]X\).
Then a degree-$2$ element \(P\in\Gamma(Y_\infty,\OO_{Y_\infty})_2\) defines a Poisson vertex algebra structure on \(X_\infty\) (i.e., it makes \(\OO_{X_\infty}\) a sheaf of PVAs) if and only if \(\bigl[\!\int P,\!\int P\bigr]=0\), where \([\,\cdot\,,\,\cdot\,]\) denotes the Lie bracket on local functionals induced by the chiral Gerstenhaber bracket; in this case, the \(\lambda\)-bracket corresponding to \(P\) is skew-symmetric.\\
Conversely, any PVA \(\lambda\)-bracket on \(\OO_{X_\infty}\) determines a unique
degree-\(2\) local functional \(\int P\) such that
\(\bigl[\!\int P,\!\int P\bigr]=0\);
equivalently, it determines \(P\) uniquely modulo total derivatives.
\end{thm}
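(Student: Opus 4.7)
\noindent\textit{Proof proposal.}
The plan is to chiralise Schwarz's classical correspondence between Poisson structures on $X$ and degree-two functions on $T^*[1]X$ satisfying $\{P,P\}=0$, with the Schouten bracket replaced by the chiral Gerstenhaber bracket on $\hatA=\Gamma(T^*[1]X,\pi_*\OO_{Y_\infty})$ produced by Proposition~\ref{Prop:GeoCGA} and Example~\ref{Exa:GCGA}. I would work on an affine étale chart $U\subset X$ with coordinates $(u^\alpha,\theta_\alpha)$; the construction then globalises by naturality of the normalised form~\eqref{Eq:nor} under étale transitions, yielding the sheaf statement on $X_\infty$.

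First, given $P\in\hatA^2$, apply the normalising operator $\N$ to bring $P$ into the form~\eqref{Eq:nor}, encoded by a matrix differential operator $\PD(\partial)$. Set $\{u^\alpha{}_\lambda u^\beta\}_P:=\PD^{\beta\alpha}(\lambda)$ on generators and extend by the Master Formula; sesquilinearity and the Leibniz rule are then automatic. Skew-symmetry of the resulting $\lambda$-bracket follows from the remark after~\eqref{Eq:nor}: graded anticommutativity of the $\theta$'s together with integration by parts forces the class $\int P$ to depend only on the skew-adjoint part of $\PD$, i.e.\ $\PD^{\alpha\beta}(\lambda)+\PD^{\beta\alpha}(-\lambda-\partial)=0$, which is exactly the PVA skew-symmetry axiom.

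The key step is the equivalence of the PVA Jacobi identity with $[\int P,\int P]=0$. By Theorem~\ref{Thm:NRB} applied with $P=Q$ of degree two,
\[
[\int P,\int P]=2\int\frac{\delta P}{\delta\theta_\sigma}\frac{\delta P}{\delta u^\sigma}.
\]
I would expand this integrand in the basis $\theta_\alpha\theta_\beta\theta_\gamma$ of $\hatA^3$, integrate by parts on the jet variables to collect like terms, and verify that the resulting coefficient symbols coincide with the cyclic sum appearing in the PVA Jacobi identity on the three generators $u^\alpha,u^\beta,u^\gamma$ (compare the master-formula form of Jacobi in \cite{BDSK09}). Since Jacobi on generators propagates to all of $\OO_{X_\infty}$ by sesquilinearity and the Leibniz rule, the equivalence follows.

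For the converse, given a PVA $\lambda$-bracket on $\OO_{X_\infty}$, define $\PD^{\beta\alpha}(\lambda):=\{u^\alpha{}_\lambda u^\beta\}$ and set $P:=\tfrac12\theta_\alpha\,\PD^{\alpha\beta}(\partial)\,\theta_\beta\in\hatA^2$; the forward construction read in reverse recovers the given bracket, while the forward direction then yields $[\int P,\int P]=0$. Uniqueness of $\int P$ modulo total derivatives is a general property of the normalising operator~$\N$. The main obstacle lies in the third paragraph: one must carefully track the formal adjoints $\partial^s\mapsto(-\partial)^s$ produced by the variational derivatives $\delta/\delta u^\sigma$ and $\delta/\delta\theta_\sigma$ on jet variables, together with the signs from moving odd generators past each other; this index combinatorics is the chiral counterpart of Schwarz's finite-dimensional identification $\{P,P\}=0\Leftrightarrow$ Jacobi.
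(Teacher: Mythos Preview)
Your outline is correct and structurally matches the paper's proof: work on an \'etale chart, normalise $P$ to the form~\eqref{Eq:nor}, define the $\lambda$-bracket on generators via the matrix symbol $\PD^{\beta\alpha}(\lambda)$, extend by the Master Formula, and read skew-symmetry off the skew-adjointness of $\PD$. The converse direction is also handled the same way.

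The one substantive difference is in the Jacobi step. You propose to expand $[\int P,\int P]=2\int(\delta P/\delta\theta_\sigma)(\delta P/\delta u^\sigma)$ directly in the odd basis $\theta_\alpha\theta_\beta\theta_\gamma$ and match coefficients against the cyclic Jacobi sum by hand; you correctly identify this index-and-sign bookkeeping as the main obstacle. The paper avoids this computation entirely: it first uses Theorem~\ref{Thm:NRB} to identify the bracket on $\hFcl$ with the variational Schouten bracket of \cite{LZ11}, and then invokes the chain of equivalences already established in the literature, namely \cite[Remark~2.4.6]{LZ11} and \cite[Thm.~1.15(a), Prop.~1.16(b), Def.~4.22, Cor.~4.23]{BDSK09}, which say precisely that $[H,H]_{SB}=0$ is equivalent to $H$ being a Hamiltonian operator, which in turn is equivalent to the PVA Jacobi identity. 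Your direct route is self-contained and is essentially what those references prove internally; the paper's route is shorter because it treats that equivalence as a black box.
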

\begin{proof}
Fix an affine open neighbourhood $U\subset X$ equipped with étale coordinates
$u=(u^1,\dots,u^n):U\to\BBA^n$ as in \S4.1, and work with
$\A_U$ and $\hatA_U$ as above. For convenience, let $\A:=\A_U,\ \hatA:=\hatA_U.$ Let $\hFcl:=\hatA/\partial\hatA$.

Let $P\in\Gamma(Y_\infty,\OO_{Y_\infty})_2$. 
By the normalization in \S4.2,
we may represent the class $\int P\in\hFcl$ in the form
\[
\int P=\frac12\int\sum_{\alpha,\beta}\theta_\alpha\,H^{\alpha\beta}(\partial)\,\theta_\beta
\]
with $H(\partial)$ skew-adjoint. Define on generators
\[
\{u^\alpha{}_\lambda u^\beta\}:=H^{\beta\alpha}(\lambda),
\]
and extend this $\lambda$-bracket to $\A$ by the Master Formula.
Skew-adjointness of $H$ implies skew-symmetry. By Theorem~\ref{Thm:NRB}, the bracket $[\ ,\ ]$ on local functionals coincides with the
variational Schouten bracket (cf.\ \cite[Remark~2.4.6]{LZ11}).
Under the normalization in \S4.2, $\int P=\frac12\int \theta\,H(\partial)\theta$,
and together with \cite[Remark 2.4.6.]{LZ11} 
$[\int P,\int P]=0$ is equivalent to the corresponding Hamiltonian operator's Schouten bracket vanishes.
On the other hand, by \cite[Thm.~1.15(a)]{BDSK09} the assignment
$\{u^\alpha{}_\lambda u^\beta\}=H^{\beta\alpha}(\lambda)$ extends by the Master Formula, and by
\cite[Prop.~1.16(b), Def.~1.17]{BDSK09} it defines a PVA structure if and only if $H$ is Hamiltonian;
finally, by \cite[Def.~4.22, Cor.~4.23]{BDSK09} this is equivalent to the Schouten bracket of $H$ vanishes $[H,H]_{SB}=0$. Conversely, any PVA $\lambda$-bracket on $\OO_{X_\infty}(U)$ is determined by $\{u^\alpha{}_\lambda u^\beta\}$. 
By \cite[Thm.~1.15(a), (1.47)]{BDSK09}, this is equivalent to a matrix $H(\lambda)$ with
entries $H^{\beta\alpha}(\lambda):=\{u^\alpha{}_\lambda u^\beta\}$.
Write $H^{\beta\alpha}(\lambda)=\sum_{s\ge0}H^{\beta\alpha}_s\lambda^s$ and set
$H^{\beta\alpha}(\partial)=\sum_{s\ge0}H^{\beta\alpha}_s\partial^s$.
As described in \S4.2, this determines the normalized density $\tilde P_{\rm norm}$ in \eqref{Eq:nor},hence determines a unique class $\int P=\frac12\int\theta H\theta$ modulo total derivatives. Since the PVA Jacobi is equivalent to the vanishing of the corresponding Schouten bracket, the same argument as in the first part of the proof implies that $[\int P, \int P]=0.$
\end{proof}

\begin{prop}
Let \(P\in\hFcl^{2}\) be in canonical form
\[
P=\frac{1}{2}\int\sum_{s}\theta_\alpha\,\PD^{\alpha\beta}_{s}\,\theta_\beta^{\,s}.
\]
For \(F,G\in\hFcl^{0}\) define the derived bracket by
\[
\{F,G\}_{P}:=-\{\{P,F\},G\}.
\]
Then
\[
\{F,G\}_{P}
=\int \frac{\delta G}{\delta u^\beta}\,\PD^{\beta\alpha}(\partial)\,\frac{\delta F}{\delta u^\alpha}.
\]
Moreover, the corresponding PVA structure is given by the \(\lambda\)-bracket \(\PD^{\alpha\beta}(\lambda)\).
\end{prop}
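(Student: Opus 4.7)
The plan is to apply Theorem~\ref{Thm:NRB} twice in order to unfold the nested bracket $-\{\{P,F\},G\}$. The crucial simplification is that both $F$ and $G$ lie in $\hFcl^{0}$, so every variational derivative of the form $\delta/\delta\theta_\sigma$ kills them, and at each invocation of Theorem~\ref{Thm:NRB} only the first of the two summands survives.

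First I would compute $\{P,F\}$. Theorem~\ref{Thm:NRB} with $p=2$, $q=0$ gives
$$\{P,F\}=\int\frac{\delta P}{\delta\theta_\sigma}\,\frac{\delta F}{\delta u^\sigma}.$$
On the normalized density $\tilde P=\tfrac12\theta_\alpha\PD^{\alpha\beta}(\partial)\theta_\beta$, the two contributions from varying each occurrence of $\theta$ combine, after integration by parts, into a single term; the skew-adjointness of the matrix operator $\PD(\partial)$ (which is built into the normalization of Section~\ref{Sec:NOE}) is what makes these two contributions add rather than cancel, giving
$$\frac{\delta\tilde P}{\delta\theta_\sigma}=\PD^{\sigma\beta}(\partial)\theta_\beta.$$
Hence $\{P,F\}=\int\PD^{\sigma\beta}(\partial)\theta_\beta\cdot\frac{\delta F}{\delta u^\sigma}$, a local functional of $\theta$-degree~$1$.

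Next I would apply Theorem~\ref{Thm:NRB} with $p=1$, $q=0$, which gives
$$\{\{P,F\},G\}=\int\frac{\delta\{P,F\}}{\delta\theta_\tau}\,\frac{\delta G}{\delta u^\tau}.$$
Writing $\PD^{\sigma\beta}(\partial)\theta_\beta=\sum_s\PD^{\sigma\beta}_s\theta_\beta^s$ and differentiating, integration by parts produces $\sum_s(-\partial)^s\!\bigl[\PD^{\sigma\tau}_s\tfrac{\delta F}{\delta u^\sigma}\bigr]$; this is the formal adjoint of $\PD^{\sigma\tau}(\partial)$ applied to $\delta F/\delta u^\sigma$, which by skew-adjointness equals $-\PD^{\tau\sigma}(\partial)\tfrac{\delta F}{\delta u^\sigma}$. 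Plugging back in and using the sign in $\{F,G\}_P:=-\{\{P,F\},G\}$, then relabeling $\tau\mapsto\beta$ and $\sigma\mapsto\alpha$, gives exactly
$$\{F,G\}_P=\int\frac{\delta G}{\delta u^\beta}\,\PD^{\beta\alpha}(\partial)\,\frac{\delta F}{\delta u^\alpha}.$$

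For the identification of the PVA structure, I would invoke Theorem~\ref{Thm:CS}: the canonical form of $P$ corresponds via the normalization of Section~\ref{Sec:NOE} to the matrix $\PD(\lambda)$, and by the construction in the proof of Theorem~\ref{Thm:CS} the induced PVA $\lambda$-bracket on generators is given precisely by this matrix (under the implicit assumption $[P,P]=0$, which is what guarantees that the resulting bracket is genuinely Poisson). The derived-bracket formula above then coincides with the standard Hamiltonian bracket associated to this PVA via the Master Formula. The main obstacle I anticipate is the sign bookkeeping in the second step—specifically, correctly producing the formal adjoint via integration by parts and then invoking skew-adjointness to recover $-\PD^{\tau\sigma}(\partial)$ with the right sign—but this is a standard computation in the variational calculus of Hamiltonian PDEs and should not pose any conceptual difficulty.
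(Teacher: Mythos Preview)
Your proposal is correct and follows essentially the same line as the paper: both compute $\{P,F\}$ and then $\{\{P,F\},G\}$ via Theorem~\ref{Thm:NRB}, using skew-adjointness of $\PD(\partial)$ to handle the integration by parts. The only presentational difference is that the paper packages the first step through the normalizing operator $\N$, writing $\{P,F\}=\int X^\alpha\theta_\alpha$ with $X^\alpha=-\PD^{\alpha\beta}(\partial)\tfrac{\delta F}{\delta u^\beta}$, rather than computing $\tfrac{\delta\tilde P}{\delta\theta_\sigma}$ directly as you do.
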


\begin{proof}
For any \(H\in\hFcl^2\) and \(F\in\hFcl^0\) we have
\[
\{H,F\}=\int X^\alpha\theta_\alpha\in\hFcl^1.
\]
By the definition of the normalising operator,
\[
X^\alpha=-\PD^{\alpha\beta}(\partial)\,\frac{\delta F}{\delta u^\beta}.
\]
Hence
\[
\begin{aligned}
\{F,G\}_P
&=-\bigl\{\{P,F\},G\bigr\}
  =-\bigl\{\!\int X^\alpha\theta_\alpha,G\bigr\}
\\
&=-\int \frac{\delta G}{\delta u^\alpha}\,X^\alpha
  =-\int \frac{\delta G}{\delta u^\alpha}
     \Bigl(-\PD^{\alpha\beta}(\partial)\frac{\delta F}{\delta u^\beta}\Bigr)
\\
&=\int \frac{\delta G}{\delta u^\alpha}\,\PD^{\alpha\beta}(\partial)\,\frac{\delta F}{\delta u^\beta}.
\end{aligned}
\]
Since \(\PD\) is skew-adjoint, \(\PD^{\alpha\beta}(\partial)=-(\PD^{\beta\alpha}(\partial))^{*}\), integration by parts gives
\[
\int \frac{\delta G}{\delta u^\alpha}\,\PD^{\alpha\beta}(\partial)\,\frac{\delta F}{\delta u^\beta}
=\int \frac{\delta G}{\delta u^\beta}\,\PD^{\beta\alpha}(\partial)\,\frac{\delta F}{\delta u^\alpha},
\]
as claimed.
\end{proof}

We now illustrate how the above shifted symplectic formalism reproduces some standard Poisson vertex algebras.

\begin{exa}[Heisenberg PVA]\label{exa:Hei}
Consider the Heisenberg Poisson vertex algebra.
Geometrically, we take \(X=\BBA^1\) with coordinate \(u\) and its arc space \(X_\infty\), so that
\[
\A := \C[J_\infty X]
= \C\bigl[u^{(n)}\mid n\ge0\bigr],
\]
and the PVA structure is given by
\[
\{u_{\lambda}u\}=\lambda.
\]
In the shifted cotangent picture, we consider the odd cotangent bundle \(Y=T^*[1]X\) with odd fiber coordinate \(\theta\), and its arc space \(Y_\infty\) with jet coordinates \(\theta^{(n)}\).
The above \(\lambda\)-bracket corresponds to the section
\[
P = \frac{1}{2}\theta\,\theta^{(1)}\ \in \ \Gamma\bigl(Y_\infty,\OO_{Y_\infty}\bigr)_2,
\]
viewed as a degree~\(2\) function on the odd symplectic supermanifold \(Y_\infty\).
A direct computation gives
\[
[P_\lambda P]\big|_{\lambda=0}=0,
\]
so \(P\) defines a Hamiltonian operator on \(X_\infty\).
Equivalently, the associated Hamiltonian operator is the constant operator \(\PD(\partial)=\partial\), and the derived bracket recovers the Heisenberg PVA above.
\end{exa}

\begin{exa}[Virasoro-Magri PVA]\label{exa:Vir}
Let \(X=\BBA^1\) with coordinate \(u\), and consider its arc space \(X_\infty\) and odd cotangent bundle \(Y=T^*[1]X\) as above.
On \(\A=\C[J_\infty X]\) we consider the Virasoro-Magri PVA structure
\[
\{u_{\lambda}u\}=u'+2u\,\lambda+c\,\lambda^3,
\]
where \(c\in\C\) is the central charge.
The corresponding Hamiltonian operator is
\[
\PD(\partial)=u'+2u\,\partial+c\,\partial^3.
\]
Using Theorem~\ref{Thm:CS}, it corresponds to a degree-$2$ section
\[
P=\frac{1}{2}(2u\,\theta\,\theta^{(1)}+c\,\theta\,\theta^{(3)})
\]
in \(\Gamma(Y_\infty,\OO_{Y_\infty})\).
Using the definition of the \(\lambda\)-bracket, one computes
\[
[P_\lambda P]\big|_{\lambda=0}
\;=\;
\partial\bigl(\theta\,\theta^{(1)}\theta^{(2)}\bigr)\ \in\ \partial\hatA.
\]
Therefore, \(P\) defines a Hamiltonian operator on \(X_\infty\), and the associated derived bracket induces the Virasoro-Magri PVA structure above.
\end{exa}

\section{Applications: PVA sheaves and the classical R-matrix}\label{Sec:App}
The main goal of this section is to present several applications of Theorem~\ref{Thm:CS}. 
In Section~\ref{Sec:SPVA} we give several examples of sheaves of Poisson vertex algebras on different algebraic schemes; in particular, we obtain a classification of certain PVA sheaves on some algebraic curves. 
In Section~\ref{subsec:CR} we give a Maurer-Cartan description of the classical $R$-matrix of Lie conformal algebras, as introduced by the author in~\cite{Fang25}.

\subsection{Sheaves of PVAs}\label{Sec:SPVA}
In this section we apply Theorem~\ref{Thm:CS} to classify global scalar PVA structures on the arc space $X_\infty$ for $X=\mathbb{CP}^1$.

We restrict our attention to $\lambda$-brackets of order at most~$3$, since this class contains the Dubrovin-Novikov hydrodynamic-type Hamiltonian operators~\cite{DN83} and it is the natural conformal (CFT-type) case in the sense of De~Sole-Kac (see, e.g.,~\cite{DSKW10}).
\begin{exa}\label{exa:CP1}
Let $X=\CP^1$ with the standard affine cover $U_i$ ($i=1,2$). 
We use a local coordinate $u$ on $U_1$ and $v$ on $U_2$, related on $U_1\cap U_2$ by
\[
v=\frac{1}{u}.
\]
We restrict to scalar PVA structures corresponding to differential operators of order at most~$3$. 
The skew-symmetry of the $\lambda$-bracket implies that on $U_1$ any such bracket must be of the form
\begin{equation}\label{eq:CP1U1}
\{u_\lambda u\}
=\frac{1}{2} (f_1)' + f_1\,\lambda + \frac{c}{12}\lambda^3,
\qquad
f_1\in \A_{U_1},\ c\in\C.
\end{equation}

The coordinate change $u\mapsto v=\tfrac1u$ induces a transformation on the fiber coordinates; with the convention $\theta_u=\partial/\partial u$, $\theta_v=\partial/\partial v$, we have on $U_1\cap U_2$
\[
\theta_u = -\frac{1}{u^2}\,\theta_v.
\]
For brevity we write $\theta:=\theta_u$; the $\lambda$-bracket~\eqref{eq:CP1U1} corresponds to a Hamiltonian function
\[
P \;=\; f_1\,\theta\,\theta^{(1)}+\frac{c}{12}\,\theta\,\theta^{(3)}
\;\in\;
\Gamma\bigl(T^*[1]U_1,\pi_*\OO_{Y_\infty}\bigr)_2.
\]
Requiring $P$ to glue across $U_1\cap U_2$ forces $f_1$ to be a linear combination of the following differential monomials (see Remark~\ref{rmk:SIE}):
\begin{equation}\label{eq:GSCP1}
1,\ u,\ u^2,\ u^{(1)},\ u\,u^{(1)},\ u^3,\ u^{(2)},\ u\,u^{(2)},\ u^2 u^{(1)},\ u^4,\ (u^{(1)})^2,\ u^{(3)}.
\end{equation}
Hence we obtain the following classification.

\smallskip
\noindent
(a) $c\neq 0$. 
In this case, up to adding total derivatives and rescaling $u$, the only global scalar PVA structure on $X_\infty$ of the form~\eqref{eq:CP1U1} is, in normal form,
\[
\{u_\lambda u\}=u' + 2u\,\lambda + \frac{c}{12}\lambda^3,
\]
i.e. the Virasoro-Magri $\lambda$-bracket with central charge~$c$.

\smallskip
\noindent
(b) $c=0$. 
In this case, all global scalar PVA structures on $X_\infty$ of the form~\eqref{eq:CP1U1} are given by
\[
\{u_\lambda u\}= q'(u)\,u' + 2q(u)\,\lambda,
\]
where, on an affine chart, $q(u)$ is a polynomial of degree at most~$4$, equivalently
\[
q\in H^0\bigl(X,\OO_X(4)\bigr).
\]
The bound $\deg q\leq4$ reflects regularity at $p=\infty$ after the coordinate change $v=\tfrac1u$ and the fiber transformation $\theta_u=-u^2\theta_v.$
\end{exa}
\begin{rmk}\label{rmk:SIE}
We denote $p=\infty$ and note that
\[
\ord_p(u)=-1, \qquad \ord_p\bigl(u^{(k)}\bigr)\ge -1-k.
\]
Write $f=\sum_i c_i\,f_i$ as a sum of differential monomials
\[
f_i=\prod_k\bigl(u^{(k)}\bigr)^{a_k^i}.
\]
Then we obtain the estimate
\begin{equation}\label{eq:SIE}
\ord_p\!\left(\prod_k\bigl(u^{(k)}\bigr)^{a_k^i}\right)
\;\ge\;\sum_k(-k-1)a_k^i.
\end{equation}
We call inequality~\eqref{eq:SIE} the sieve inequality. 

Equation~\eqref{eq:SIE} gives \emph{termwise sufficient} conditions on the exponents $a_k^i$
to guarantee regularity at $p$ after multiplying by the fiber factor.
In particular, in the Example~\ref{exa:CP1} we get the sufficient bounds
$\sum_{k}(k+1)a_k^1\leq4,\ \sum_k(k+1)a_k^3\leq2.$
\end{rmk}
\subsection{Classical R-matrix}\label{subsec:CR}
Recall in the paper \cite{Fang25}, the author introduced the classical \(R\)-matrix of Lie conformal algebra. Theorem \ref{Thm:CS} implies the following corollary.

\begin{thm}\label{Thm:RM}
Let \((L,\partial,[\,\cdot{}_{\lambda}\cdot\,])\) be a Lie conformal algebra
(here we assume that \(L\) is a free \(\C[\partial]\)-module of finite rank, e.g.\
\(L=v_k(\mathfrak{g})\), the affine Kac-Moody Lie conformal algebra), and let
\(\V:=\SB(L)\) be the free commutative differential algebra generated by \(L\),
endowed with the Poisson vertex algebra structure obtained by extending the
\(\lambda\)-bracket via the Master Formula.

Set \(N:=\mathrm{rank}_{\C[\partial]}(L)\) and let \(X:=\BBA^{N}\).
Let \(X_\infty\) be the arc space of \(X\). Then \(\OO(X_\infty)\cong \V\) as
differential algebras; hence we identify \(X_\infty\simeq\Spec(\V)\).
Set \(Y:=T^*[1]X\).
Suppose \(R\in\End_{\C}(L)\) satisfies \([R,\partial]=0\).

Then \(R\) is a classical $R$-matrix if and only if the element
\[
P_R \;=\; \theta_\alpha\,\{\,u^{\alpha}{}_{\partial}u^{\beta}\,\}_{R,\rightarrow}\,\theta_\beta
\ \in\ \hatA^{2}
\]
satisfies \(\{\!\int P_R,\int P_R\!\}=0\). (Here \({}_{\rightarrow}\) means: replace
\(\lambda\) by \(\partial\) and move all \(\partial\)’s to the right.)
Equivalently, set $\Theta_R :=\int(P_R - P)$. Then $\Theta_R$ is a Maurer-Cartan element of the
dg Lie algebra $(\hFcl, [ - , - ], d_P:=[P,-])$, i.e.\ $d_P\Theta_R+\frac12[\Theta_R,\Theta_R]=0.$
\end{thm}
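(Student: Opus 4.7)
The plan is to reduce the statement to Theorem~\ref{Thm:CS} and then perform a short bracket expansion. First, I would observe that $P_R$ is precisely the degree-$2$ element of $\hatA^{2}$ associated, via the normalization recipe of \S\ref{Sec:NOE} and Theorem~\ref{Thm:CS}, to the $R$-deformed $\lambda$-bracket
\[
\{a{}_\lambda b\}_R \;:=\; \tfrac{1}{2}\bigl(\{R(a){}_\lambda b\} + \{a{}_\lambda R(b)\}\bigr)
\]
on $L$, extended to $\V=\SB(L)$ by the Master Formula. Since $[R,\partial]=0$ and the original bracket is sesquilinear and skew-symmetric, so is $\{-{}_\lambda -\}_R$; hence $P_R$ yields a skew-adjoint matrix differential operator after normalization, just as in \S\ref{Sec:NOE}.

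Next, I would apply Theorem~\ref{Thm:CS} to conclude that $\{\!\int P_R,\int P_R\!\}=0$ if and only if the $R$-deformed $\lambda$-bracket defines a PVA structure on $\OO_{X_\infty}\cong\V$. Because $\V$ is the free commutative differential algebra on $L$, a sesquilinear skew-symmetric $\lambda$-bracket on the generators extends by the Master Formula to a PVA if and only if the Jacobi identity holds on $L$. By the very definition adopted in \cite{Fang25}, this Jacobi identity for $\{-{}_\lambda -\}_R$ on $L$ is equivalent to the (modified) classical Yang-Baxter equation for $R$, i.e., to $R$ being a classical $R$-matrix.

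For the Maurer-Cartan reformulation, I would use that $P$ itself satisfies $[\int P,\int P]=0$, since it encodes the original PVA structure on $\V$. Writing $\int P_R = \int P + \Theta_R$ and expanding by bilinearity, using the graded symmetry of $[-,-]$ on pairs of degree-$2$ functionals (note that for $|a|=|b|=2$ one has $(|a|+1)(|b|+1)\equiv 1\pmod 2$ in Proposition~\ref{Prop:CGA}), one obtains
\[
[\int P_R,\int P_R] \;=\; [\int P,\int P] + 2\,[\int P,\Theta_R] + [\Theta_R,\Theta_R] \;=\; 2\bigl(d_P\Theta_R+\tfrac12[\Theta_R,\Theta_R]\bigr).
\]
Hence the vanishing of $[\int P_R,\int P_R]$ is equivalent to the Maurer-Cartan equation $d_P\Theta_R+\tfrac12[\Theta_R,\Theta_R]=0$. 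That $(\hFcl,[-,-],d_P)$ is a genuine dg Lie algebra follows from graded Jacobi: $2\,d_P^{\,2} = [[P,P],-] = 0$.

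The main technical obstacle I anticipate is matching the explicit computation of $[\int P_R,\int P_R]$ via the variational Schouten formula of Theorem~\ref{Thm:NRB} with the mCYBE expressed in terms of $\lambda$-brackets on $L$: this requires careful bookkeeping of how the arrow operation $\rightarrow$ redistributes derivatives through the odd generators $\theta_\alpha$ and how integration by parts interacts with skew-symmetrization. A clean way to avoid excess combinatorics is to verify skew-symmetry, sesquilinearity, and Jacobi for $\{-{}_\lambda -\}_R$ directly on the generators $u^\alpha\in L$, where the mCYBE appears transparently, and then invoke the uniqueness clause of Theorem~\ref{Thm:CS} to identify the corresponding Hamiltonian class modulo $\partial\hatA$ with $\int P_R$.
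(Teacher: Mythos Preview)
Your proposal is correct and follows exactly the route the paper takes: the paper states Theorem~\ref{Thm:RM} as an immediate corollary of the Correspondence Theorem~\ref{Thm:CS}, and your write-up simply spells out the three steps implicit in that claim (identify $P_R$ with the normalized Hamiltonian of the $R$-deformed $\lambda$-bracket, invoke Theorem~\ref{Thm:CS} so that $[\int P_R,\int P_R]=0$ is equivalent to the Jacobi identity on generators, i.e.\ the mCYBE, and then expand $[\int P+\Theta_R,\int P+\Theta_R]$ for the Maurer--Cartan reformulation). No deviation from the paper's argument.
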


\begin{exa}
In the affine Kac–Moody Lie conformal algebra case, let $\V:=\V_k(\mathfrak{g})$.
We identify $\Spec\V$ with $\mathfrak{g}^*_{k,\infty}$, where $k$ indicates the level of the
$\lambda$-bracket. In particular,~\cite[Corollary~5.6]{Fang25} shows that if
$(\mathfrak{g},(\,\cdot\,,\,\cdot\,))$ is a quadratic Lie algebra and
$(\mathfrak{g},\mathfrak{g}_+,\mathfrak{g}_-)$ is a Manin triple with respect to $(\,\cdot\,,\,\cdot\,)$,
then the projection maps give a classical $R$-matrix for $v_k(\mathfrak{g})$.
This can be viewed as a chiral counterpart of the classical fact that a Manin triple
gives a projection-type classical $R$-matrix.
\end{exa}
Motivated by the above discussion, we now define the notion of a classical \(R\)-matrix for a classical finite \(\W\)-algebra.

\begin{dfn}\label{dfn:CFWRM}
Let \(\W_{\fin}^{\cl}(\fg,f_{\sub})\) be the subregular classical finite \(\W\)-algebra.
As a Poisson variety one has
\[
\Spec \W_{\fin}^{\cl}(\fg,f_{\sub}) \;\cong\; \Sf,
\]
where \(\Sf\) is the Slodowy slice associated to \(f_{\sub}\), equipped with the
Poisson structure induced from the Kirillov-Kostant Poisson structure on \(\fg^*\).
We denote by \(\pi_f\in\Gamma(\Sf,\wedge^2 T\Sf)\) the corresponding Poisson bivector.

Set \(X := \Sf\) and \(Y := T^*[1]X\).
We say that an element \(\Pi_R\in\Gamma(Y,\OO_Y)_2\) is a \emph{classical \(R\)-matrix}
of \(\W^{\cl}_{\fin}(\fg,f_{\mathrm{sub}})\) if and only if
$d_{\pi_f}\Theta_R+\tfrac12[\Theta_R,\Theta_R]=0,$ where $\Theta_R:=\Pi_R-\pi_f$ and \([\cdot,\cdot]\) is the Schouten–Nijenhuis bracket on polyvector fields.
\end{dfn}
\begin{exa}\label{exa:CDSsl3}
Let $\fg=\fsl_3$ and $\W^{\cl}_{\fin}(\fsl_3,f_{\sub})$ be the
subregular finite classical $\W$–algebra with ordered basis
$u_1,u_2,u_3$ and Poisson structure
\[
H=
\begin{bmatrix}
0 & -2u_1 & u_2^2 \\
2u_1 & 0 & -2u_3 \\
-u_2^2 & 2u_3 & 0
\end{bmatrix}.
\]
Let $X=\Sf$ be the corresponding Slodowy slice and $Y=T^*[1]X$. Set
$V=\Span_\C\{u_1,u_2,u_3\}$ and take an endomorphism
$R\in\End_\C(V)$, $R(u_i)=\sum_{j=1}^3 R_{ij}\,u_j,\   R_{ij}\in\C.$
It induces a degree-$2$ function
\[
\Pi_R=\sum_{i,j=1}^3
\bigl(\{R(u_i),u_j\}+\{u_i,R(u_j)\}\bigr)\theta_i\theta_j
\in\Gamma(Y,\OO_Y)_2,
\]
where $\theta_i$ are the odd coordinates corresponding to $u_i$.
The Jacobi identity for $\Pi_R$ is equivalent to the polynomial
relations
\begin{align*}
f_1&:=R_{11}R_{21}+R_{21}R_{22}+R_{23}R_{31}=0,\\
f_2&:=R_{12}R_{31}+R_{22}R_{32}+R_{32}R_{33}=0,\\
f_3&:=R_{11}^2+R_{12}R_{21}-R_{23}R_{32}-R_{33}^2=0,\\
f_4&:=R_{13}R_{21}+R_{22}R_{23}+R_{23}R_{33}=0,\\
f_5&:=R_{11}R_{12}+R_{12}R_{22}+R_{13}R_{32}=0.
\end{align*}
We call this the \emph{constant $R$–deformation} of
$\W^{\cl}_{\fin}(\fsl_3,f_{\sub})$. In algebraic–geometric terms, put
$\C[R]=\C[R_{ij}\mid 1\le i,j\le 3]$ so that $\Spec\C[R]\simeq\BBA^9$
and let $I=\langle f_1,\dots,f_5\rangle\subset\C[R]$. Then
$\R:=V(I)\subset\BBA^9$
parametrizes all constant $3\times 3$ matrices $R=(R_{ij})$ such that
$\Pi_R$ is Poisson; equivalently, $R$ is a classical $R$–matrix for
$\W^{\cl}_{\fin}(\fsl_3,f_{\sub})$ if and only if $(R_{ij})\in\R$.
More general (non-constant) $R$–deformations are governed by the
Maurer–Cartan equation of Definition~\ref{dfn:CFWRM}; the variety $\R$
describes the locus of constant $R$–operators in this moduli problem.\\
In analogy with the classical study of constant solutions of the CYBE
(e.g. Belavin-Drinfel'd), we single out three natural families of constant endomorphisms $R$.\\
\medskip\noindent
\emph{Type I (Cartan type).}
Assume $R$ is diagonal. Then $R$ defines a constant $R$–deformation if
and only if $R=\mathrm{diag}(a,b,\pm a),\ a,b\in\C.$

\medskip\noindent
\emph{Type II (upper triangular type).}
Assume $R$ is upper triangular, i.e.\ $R_{21}=R_{31}=R_{32}=0$. Then the
Jacobi identity for $\Pi_R$ is equivalent to
\[
R_{11}^2=R_{33}^2,\qquad
R_{12}(R_{11}+R_{22})=0,\qquad
R_{23}(R_{22}+R_{33})=0.
\]

\medskip\noindent
\emph{Type III (mixed type).}
Assume $R_{12}=R_{21}=R_{23}=R_{32}=0.$
Then $R$ defines a constant $R$–deformation if and only if $R_{11}=\pm R_{33}.$\\
From now on we choose the $+$ branch $R_{11}=R_{33}$ in Type~III. We observe that the Adler-Kostant-Symes scheme remains valid under
constant $R$-deformations. Let
\(Z_{\Pois}\bigl(\W^{\cl}_{\fin}(\fg,f_{\sub})\bigr)\)
denote the Poisson center of \(\W^{\cl}_{\fin}(\fg,f_{\sub})\), and write
\(\W^{\cl}_{\fin}(\fg,f_{\sub})_R\) for the $R$-deformed classical finite
$\W$-algebra. Then for any
\(f,g\in Z_{\Pois}\bigl(\W^{\cl}_{\fin}(\fg,f_{\sub})\bigr)\) we have
\(\{f,g\}_R=0\), where \(\{\cdot,\cdot\}_R\) denotes the $R$-deformed
Poisson bracket. Consequently, the elements of
\(Z_{\Pois}\bigl(\W^{\cl}_{\fin}(\fg,f_{\sub})\bigr)\) provide a family
of mutually commuting Hamiltonians, and by restricting to suitable
symplectic leaves we obtain integrable systems. The proof is entirely
parallel to that of~\cite[Theorem~6.3]{Fang25}. We now present a three-dimensional integrable system obtained from the
mixed-type constant $R$-matrix of Type~III. In this case the center is
given by
\[
S \;=\; \tfrac{1}{6}u_2^3 + u_1u_3.
\]
The corresponding Type~III deformation induces the system
\begin{equation}\label{eq:EUL}
\begin{aligned}
\dot{u}_1 &= u_2^2(\Delta\, u_1 + R_{13}u_3),\\
\dot{u}_2 &= 2\big(R_{31}u_1^2 - R_{13}u_3^2\big),\\
\dot{u}_3 &= -\,u_2^2\big(R_{31}u_1 + \Delta\, u_3\big),
\end{aligned}
\end{equation}
where $\Delta := R_{33}-R_{22}$.

The Type~III $R$-matrix also produces a compatible Poisson pencil.
More precisely, let $H$ denote the original Poisson structure on
$\Sf$ and let $H^{\mix}$ be the Poisson structure obtained
from the mixed-type $R$-matrix. Then, for any $\epsilon\in\C$, the
combination $H + \epsilon H^{\mix}$ is again a Poisson structure on
$\Sf$.

We now choose the parameters $R_{ij}$ so as to obtain an elliptic-type
ODE. For convenience we impose $\Delta = 0$, assume $R_{13}\neq 0$ and
$R_{31}\neq 0$, and rescale the time variable and $u_i$ so that
$R_{13}=R_{31}=1$. Then \eqref{eq:EUL} reduces to
\[
\dot{u}_1 = u_3 u_2^2,\qquad
\dot{u}_2 = 2(u_1^2 - u_3^2),\qquad
\dot{u}_3 = -u_1 u_2^2.
\]
This system admits an additional integral of motion
$J = u_1^2 + u_3^2$. In particular, it is a three-dimensional
Hamiltonian system with two independent integrals and hyperelliptic-type
solutions. To write it as a hyperelliptic ODE, we use polar
coordinates in the $(u_1,u_3)$–plane: let $u_1=r\cos\theta,\ u_3=r\sin\theta$
so that $J=r^2$. Then one finds $\dot{u}_2^2=P(u_2)$, where
\[
P(u_2)=4r^4-16\Bigl(S-\tfrac{1}{6}u_2^3\Bigr)^2
=\frac{4}{9}\bigl(u_2^3-3(2S-r^2)\bigr)\bigl(3(2S+r^2)-u_2^3\bigr).
\]
Thus the reduced equation defines a genus-$2$ hyperelliptic curve, and
the solutions $u_2(t)$ are expressed in terms of hyperelliptic (genus~2)
theta functions.
\end{exa}
We summarize the above example as follows.
\begin{prop}\label{prop:R-types}
With notation as above, the following three families of constant
endomorphisms $R\in\End_\C(V)$ define constant $R$–deformations
of $\W^{\cl}_{\fin}(\fsl_3,f_{\sub})$.
\begin{description}
  \item[Type I (Cartan type).]
  Assume $R$ is diagonal. Then $R$ defines a constant $R$–deformation
  if and only if
  \[
    R=\mathrm{diag}(a,b,\pm a),\qquad a,b\in\C.
  \]

  \item[Type II (upper triangular type).]
  Assume $R$ is upper triangular, i.e.\ $R_{21}=R_{31}=R_{32}=0$.
  Then the Jacobi identity for $\Pi_R$ is equivalent to
  \[
    R_{11}^2=R_{33}^2,\qquad
    R_{12}(R_{11}+R_{22})=0,\qquad
    R_{23}(R_{22}+R_{33})=0.
  \]

  \item[Type III (mixed type).]
  Assume $R_{12}=R_{21}=R_{23}=R_{32}=0$. Then $R$ defines a constant
  $R$–deformation if and only if $R_{11}=\pm R_{33}$.
\end{description}
In particular, any Type~III$_+$ $R$-matrix (i.e.\ with $R_{11}=R_{33}$) produces a compatible Poisson pencil.
\end{prop}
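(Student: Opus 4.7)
The plan is to verify each of the three type claims by direct substitution into the five polynomial relations $f_1,\dots,f_5$ from Example~\ref{exa:CDSsl3}, and then to argue the Poisson pencil compatibility separately, combining Theorem~\ref{Thm:CS} with the Adler--Kostant--Symes philosophy.

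First, I would handle Types~I, II, and III by substituting the prescribed vanishing pattern into $f_1,\dots,f_5$ and reading off the surviving relations. Each summand of $f_1, f_2, f_4, f_5$ contains at least one of the off-diagonal entries $R_{21},R_{31},R_{32},R_{12},R_{23}$, so in Types~I and III all four relations vanish identically, while in Type~II only $f_1,f_2$ vanish and $f_4,f_5$ collapse to the products $R_{23}(R_{22}+R_{33})$ and $R_{12}(R_{11}+R_{22})$. In each case $f_3$ restricts on the prescribed locus to $R_{11}^2 - R_{33}^2$, giving the constraint $R_{11} = \pm R_{33}$ in Types~I and III and the equality $R_{11}^2 = R_{33}^2$ in Type~II. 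This settles the ``if and only if'' statement of each bullet.

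For the Poisson pencil claim in Type~III$_+$, let $P,\,P^{\mix}\in\hatA^2$ denote the degree-two sections corresponding to $H$ and $H^{\mix}$. By Theorem~\ref{Thm:CS}, the pencil $H+\epsilon H^{\mix}$ is Poisson for every $\epsilon\in\C$ if and only if $\int(P+\epsilon P^{\mix})$ satisfies the classical master equation for every $\epsilon$. By bilinearity of the bracket and the vanishing of the two diagonal terms (proved above), this reduces to the single identity
\[
\{\!\int P,\int P^{\mix}\!\}=0.
\]
I would verify this either by direct expansion using the Schouten-style formula of Theorem~\ref{Thm:NRB}, adapted to the finite-dimensional three-generator Slodowy-slice setting where $\partial$ plays no role, or, more conceptually, by recognizing the construction $R\mapsto\Pi_R$ as the AKS $R$-bracket attached to a solution of the polynomial system cutting out $\R\subset\BBA^9$.

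The main obstacle will be the compatibility check. The substitutions handling Types~I--III are mechanical pattern-matching, but making $\{\int P,\int P^{\mix}\}=0$ clean requires either a brute-force computation of the nine $\theta_i\theta_j$-coefficients or a clean invocation of an AKS-type principle saying that any $R$-operator lying in $\R$ automatically produces a compatible Poisson pencil. I would pursue the latter route, which reduces the compatibility to the small linear statement that Type~III$_+$ entries satisfy the modified Yang--Baxter condition on $V=\Span_\C\{u_1,u_2,u_3\}$; the diagonal symmetry $R_{11}=R_{33}$ is exactly what makes the mCYBE cross-terms cancel on the basis $\{u_1,u_2,u_3\}$ paired with the bracket encoded by $H$.
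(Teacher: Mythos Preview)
Your substitution argument for Types~I--III is correct and is exactly what the paper does: Proposition~\ref{prop:R-types} is presented as a summary of Example~\ref{exa:CDSsl3}, where the ``proof'' consists precisely of listing $f_1,\dots,f_5$ and then stating, for each vanishing pattern, the surviving constraints. Your check that every monomial of $f_1,f_2,f_4,f_5$ contains one of $R_{12},R_{21},R_{23},R_{32}$ (so these four vanish in Types~I and~III, while in Type~II only $f_4,f_5$ survive as the stated products), together with $f_3|_{\text{locus}}=R_{11}^2-R_{33}^2$, is the whole content.

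For the Poisson pencil claim the paper itself offers no argument beyond the bare assertion in Example~\ref{exa:CDSsl3}, so you are already doing more. However, your proposed mCYBE route has a genuine obstruction: $V=\Span_\C\{u_1,u_2,u_3\}$ is \emph{not} closed under the Poisson bracket (e.g.\ $\{u_1,u_3\}=u_2^2\notin V$), so there is no Lie bracket on $V$ for which a modified Yang--Baxter equation makes sense in the standard way. The brute-force Schouten check would of course work, but there is a one-line argument you nearly reach when you mention ``the polynomial system cutting out $\R$'': the assignment $R\mapsto\Pi_R$ is \emph{linear} in $R$, the locus Type~III$_+$ is a \emph{linear} subspace of $\BBA^9$, and your Type~III computation shows that all five $f_i$ vanish identically on it, so Type~III$_+\subset\R$. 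Since the undeformed bracket $H$ is $\Pi_{R_0}$ for $R_0$ a scalar multiple of the identity (which lies in Type~III$_+$), one has $H+\epsilon H^{\mix}=\Pi_{R_0+\epsilon R}$ with $R_0+\epsilon R\in\text{Type~III}_+\subset\R$ for every $\epsilon$. This avoids both the nine-coefficient expansion and the ill-posed mCYBE appeal.
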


\begin{thm}\label{thm:RDWSY}
In the setting of the subregular finite classical
$\W$–algebra $\W^{\cl}_{\fin}(\fsl_3,f_{\sub})$ with Poisson
structure $H$ as above, consider a constant $R$–matrix of
Type~\textnormal{III}$_+$ and impose $\Delta = R_{33}-R_{22}=0$.
Assume $R_{13}\neq 0$ and $R_{31}\neq 0$, and rescale the time
variable and $u_i$ so that $R_{13}=R_{31}=1$. Then the corresponding
$R$–deformation of the Hamiltonian flow generated by
\[
S \;=\; \tfrac{1}{6}u_2^3 + u_1u_3
\]
is given by the three–dimensional system
\begin{equation}\label{eq:RDW}
\dot{u}_1 = u_3 u_2^2,\qquad
\dot{u}_2 = 2(u_1^2 - u_3^2),\qquad
\dot{u}_3 = -u_1 u_2^2.
\end{equation}
This system has another integral of motion, $J=u_1^2+u_3^2$. Moreover,
the component $u_2(t)$ can be expressed in terms of hyperelliptic
functions. 
\end{thm}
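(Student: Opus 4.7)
My plan is a direct computation of the $R$-deformed Hamilton flow, followed by an elementary reduction via two first integrals to a hyperelliptic quadrature.

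First, I would decompose $R=aI+R_{13}E_{13}+R_{31}E_{31}$, where $a:=R_{11}=R_{22}=R_{33}$ by the Type~III$_+$ hypothesis combined with $\Delta=0$. A direct check on generators shows that $S$ is a Casimir of the original bracket $H$ (for instance $\{S,u_1\}=\tfrac12 u_2^2\{u_2,u_1\}+u_1\{u_3,u_1\}=u_1u_2^2-u_1u_2^2=0$, and analogously for $u_2,u_3$), so the scalar part $aI$—which contributes $2aH$ to the deformation—drops out of $\{u_i,S\}_R$. Using the bi-derivation formula $H^R_{ij}=\sum_k(R_{ik}H_{kj}+H_{ik}R_{jk})$ inherent in the construction of $\Pi_R$ (Example~\ref{exa:CDSsl3}), the only surviving part after setting $R_{13}=R_{31}=1$ is
\[
H^{\mix}=\begin{pmatrix}0&2u_3&0\\-2u_3&0&2u_1\\0&-2u_1&0\end{pmatrix},
\]
and the identity $\dot u_i=\sum_j H^{\mix}_{ij}\,\partial_{u_j}S$ then yields the system \eqref{eq:RDW} directly.

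Next, I would verify the conservation of $J$ by the one-line computation $\dot J=2u_1(u_3 u_2^2)+2u_3(-u_1 u_2^2)=0$. Since $S$ and $J$ are two functionally independent integrals on a three-dimensional phase space, the flow is one-dimensional on a generic joint level set $\{S=c_1,\,J=c_2\}$. To extract an ODE for $u_2(t)$, I would introduce polar coordinates $u_1=r\cos\phi$, $u_3=r\sin\phi$ with $r^2=J$; then $u_1u_3=\tfrac{r^2}{2}\sin 2\phi$ and $u_1^2-u_3^2=r^2\cos 2\phi$. Combining the relation $S-\tfrac{1}{6}u_2^3=u_1u_3$ with $\sin^2 2\phi+\cos^2 2\phi=1$ and $\dot u_2=2r^2\cos 2\phi$ gives
\[
\dot u_2^{\,2}=4r^4-16\bigl(S-\tfrac{1}{6}u_2^3\bigr)^2
=\tfrac{4}{9}\bigl(u_2^3-3(2S-r^2)\bigr)\bigl(3(2S+r^2)-u_2^3\bigr),
\]
which is an affine model $y^2=P(u_2)$ of a hyperelliptic curve of genus $2$ (smooth for generic values of $S,J$). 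The inversion of the corresponding Abel-Jacobi integral then expresses $u_2(t)$ in terms of genus-$2$ theta functions via classical Mumford/Jacobi inversion on $\mathrm{Jac}(C)$.

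The argument presents no deep obstruction. The only point requiring care is the bookkeeping that the scalar part $aI$ of $R$ decouples from the dynamics \emph{precisely because} $S$ is a Casimir of $H$; this is what makes the normalized system \eqref{eq:RDW} independent of $a$ (and justifies rescaling $R_{13},R_{31}$ to $1$ without loss). Modulo this observation, both the derivation of \eqref{eq:RDW} and the reduction to the hyperelliptic ODE are short direct computations, and the genus-$2$ claim follows from $P(u_2)$ being a generically squarefree sextic, so that the spectral curve alluded to in Proposition~\ref{Prop:LAX} realises the system as a Mumford-Beauville type integrable hierarchy.
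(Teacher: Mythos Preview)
Your proposal is correct and follows essentially the same route as the paper: derive the system from the $R$-deformed bracket, verify $J$ directly, pass to polar coordinates in the $(u_1,u_3)$-plane, and eliminate $\phi$ via $\sin^2 2\phi+\cos^2 2\phi=1$ to obtain the sextic $\dot u_2^{\,2}=4r^4-16\bigl(S-\tfrac16 u_2^3\bigr)^2$. Your explicit remark that the scalar part $aI$ of $R$ decouples precisely because $S$ is a Casimir of $H$ is a useful clarification not stated in the paper (which instead first writes the general Type~III flow \eqref{eq:EUL} and then specializes), but otherwise the arguments coincide.
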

We call the system obtained from the $R$-deformation of a classical finite/affine
$\W$-algebra the \emph{$R$-deformed $\W$-system}. The following statement is straightforward.

\begin{prop}\label{Prop:LAX}
Consider the $R$-deformed $\W$-system~\eqref{eq:RDW} in the mixed-type
case with conserved quantities
\(
S=\tfrac16 u_2^3+u_1u_3
\)
and
\(
J=u_1^2+u_3^2.
\)
Set \(x:=u_2\) and \(y:=3(u_1^2-u_3^2)\), so that
\[
y^2=g(x),\qquad
g(\lambda):=-\lambda^6+12S\lambda^3+9\bigl(J^2-4S^2\bigr).
\]
Fix a constant \(b\in\C\) with \(b^2=g(1)\), and work on the Zariski open subset
\[
U:=\{x\neq 1,\ y\neq 0\}.
\]
Define on \(U\)
\[
u(\lambda)=(\lambda-x)(\lambda-1),\qquad
m:=\frac{y-b}{x-1},\qquad
v(\lambda)=b+m(\lambda-1),
\]
and set
\[
w(\lambda):=\frac{g(\lambda)-v(\lambda)^2}{u(\lambda)}\in \mathcal O(U)[\lambda].
\]
(Indeed, since \(g(x)=y^2\) and \(g(1)=b^2\), the polynomial \(g(\lambda)-v(\lambda)^2\)
vanishes at \(\lambda=x\) and \(\lambda=1\), hence it is divisible by \(u(\lambda)\).)
Let
\[
L(\lambda)=\begin{bmatrix} v(\lambda) & w(\lambda)\\ u(\lambda) & -v(\lambda)\end{bmatrix},
\qquad
M(\lambda)=-\frac{1}{\lambda-x}\begin{bmatrix}
\tfrac13 y & \Phi(x,y)\\[2pt]
0 & -\tfrac13 y
\end{bmatrix},
\]
where
\[
\Phi(x,y):=\frac{(x-1)g'(x)-2y(y-b)}{3(x-1)^2}\in \mathcal O(U).
\]
Then, on \(U\), the system~\eqref{eq:RDW} is equivalent to the Lax equation
\[
\dot L(\lambda)=[L(\lambda),M(\lambda)]
\qquad\text{(as an identity in } Mat_{2}\bigl(\mathcal O(U)(\lambda)\bigr)\text{).}
\]
In particular, the flow is of Mumford-Beauville type (indeed, even Mumford type).
\end{prop}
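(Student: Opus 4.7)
The plan is a direct verification in three steps: establish the algebraic identity $y^2 = g(x)$ along the flow, confirm that $w(\lambda)$ is genuinely polynomial so $L$ sits in Mumford form with $\det L = -g$, and then compute $[L,M]$ entry by entry and match with $\dot L$. For the first step I would use the conservation of $J = u_1^2 + u_3^2$ and $S = \tfrac{1}{6}u_2^3 + u_1u_3$ under \eqref{eq:RDW} to expand
\[
y^2 = 9(u_1^2 - u_3^2)^2 = 9J^2 - 36(u_1 u_3)^2 = 9J^2 - 36\bigl(S - \tfrac{1}{6}x^3\bigr)^2,
\]
which collapses to $-x^6 + 12Sx^3 + 9(J^2 - 4S^2) = g(x)$. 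Differentiating this identity (or computing directly from \eqref{eq:RDW}) produces the two kinematic identities $\dot x = 2y/3$ and $\dot y = g'(x)/3$, which drive the rest of the argument.

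Second, I would verify that $w$ is polynomial, i.e., $L(\lambda) \in \mathrm{Mat}_2(\OO(U))[\lambda]$. Since $v(1) = b$ with $b^2 = g(1)$ and $v(x) = b + m(x-1) = y$ with $y^2 = g(x)$, the polynomial $g(\lambda) - v(\lambda)^2$ vanishes at $\lambda = 1$ and at $\lambda = x$, hence is divisible by $u(\lambda) = (\lambda - x)(\lambda - 1)$. Consequently $\det L(\lambda) = -v^2 - uw = -g(\lambda)$, so the spectral curve is the fixed genus-$2$ hyperelliptic curve $\{\mu^2 = g(\lambda)\}$, and $(u,v)$ lies in Mumford's normal form for divisors on it, with one supporting point pinned at $(1,b)$ and the other equal to the moving point $(x,y)$.

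Third, I would evaluate $[L,M]$ directly. Using $u/(\lambda - x) = \lambda - 1$, a short calculation yields
\[
[L, M] = \frac{1}{\lambda - x}\begin{pmatrix} u\Phi & 2(wy/3 - v\Phi) \\ -2uy/3 & -u\Phi \end{pmatrix}.
\]
The $(2,1)$-entry is $-(2y/3)(\lambda - 1) = -\dot x (\lambda - 1) = \dot u$. The $(1,1)$-entry equals $(\lambda - 1)\Phi$; since $b$ is constant in $t$, $\dot v = \dot m(\lambda - 1)$, and applying the quotient rule to $m = (y-b)/(x-1)$ gives $\dot m = \Phi$ exactly by the definition of $\Phi$. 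Tracelessness of $L$ handles the $(2,2)$-entry. For the $(1,2)$-entry, differentiating $w = (g - v^2)/u$ in $t$ with $\dot g = 0$ and substituting the preceding formulas yields $\dot w = 2(wy/3 - v\Phi)/(\lambda - x)$, matching $[L,M]_{12}$.

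The main obstacle is that this last identity must be a well-defined polynomial relation: the numerator $2(wy/3 - v\Phi)$ must vanish at $\lambda = x$. I would handle this via L'H\^{o}pital applied to $w = (g - v^2)/u$ at the common zero $\lambda = x$, obtaining $w(x) = [g'(x) - 2ym]/(x-1) = 3\Phi(x,y)$; together with $v(x) = y$, this forces $wy/3 - v\Phi$ to vanish at $\lambda = x$, which is precisely why $\Phi$ takes its stated form. Finally, since the characteristic polynomial $\mu^2 - g(\lambda)$ is preserved by the Lax flow, the motion linearizes on the Jacobian of the fixed genus-$2$ curve $\{\mu^2 = g(\lambda)\}$; combined with the Mumford parameterization from Step~2, this identifies the system as being of Mumford-Beauville (indeed Mumford) type.
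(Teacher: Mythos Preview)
Your proposal is correct; the paper itself provides no proof, merely introducing the proposition with ``The following statement is straightforward,'' and your three-step direct verification (the curve identity $y^2=g(x)$ via the conserved quantities, the divisibility check for $w$, and the entrywise matching of $\dot L$ with $[L,M]$ using $\dot x=2y/3$, $\dot y=g'(x)/3$, $\dot m=\Phi$) is exactly the straightforward computation the paper alludes to.
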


Using the ideas explored above, we obtain a natural chiralization of
classical $R$–matrices for (affine) classical $\W$–algebras.

\begin{dfn}\label{dfn:AWARM}
Let $X=\Sf$ and $Y:=T^*[1]\Sf$.
We fix an identification of Poisson vertex algebras
$\W^{\cl}(\fg,f)\;\cong\;\OO(X_\infty)$
so that
\[
\Spec\W^{\cl}(\fg,f)\;\cong\;X_\infty.
\]
By Theorem~\ref{Thm:CS}, the corresponding PVA structure on $X_\infty$
is encoded by a degree-$2$ element
\[
P\in\Gamma(Y,\pi_*\OO_{Y_\infty})_2
\]
satisfying $[\int P,\int P]=0$, where $[-,-]$ is the Schouten bracket from
Theorem~\ref{Thm:CS}.
We say that an element
\[
\Pi_R\in\Gamma(Y,\pi_*\OO_{Y_\infty})_2
\]
is a \emph{classical $R$–matrix} of $\W^{\cl}(\fg,f)$ if and only if
it satisfies the Maurer-Cartan equation
\[
d_{P}\Theta_R+\tfrac12[\Theta_R,\Theta_R]=0,
\qquad d_P:=[\int P,-],
\]
where $P$ is the Hamiltonian structure of the classical affine $\W$–algebra and $\Theta_R:=\int(\Pi_R-P)$.\\

Equivalently, $\Pi_R$ is a classical $R$-matrix if and only if
\[
[\int\Pi_R,\int\Pi_R]=0.
\]

\end{dfn}

In this formalism, a classical $R$–matrix for $\W^{\cl}(\fg,f)$ may be
viewed as a $QP$–chiralization of the classical $R$–matrix on the finite
classical $\W$–algebra $\W^{\fin}(\fg,f)$ defined in
Definition~\ref{dfn:CFWRM}.

\end{document}